\newtheorem{theorem}{Theorem}[section]
\newtheorem{lemma}{Lemma}[section]
\newtheorem{corollary}{Corollary}[section]
\newtheorem{remark}{Remark}[section]
\newtheorem{definition}{Definition}[section]
\newtheorem{proposition}{Proposition}[section]
\newtheorem{example}{Example}[section]
\numberwithin{equation}{section}
\newcommand{\bth}{\begin{theorem}}
\newcommand{\ethe}{\end{theorem}}
\newcommand{\bre}{\begin{remark}}
\newcommand{\ere}{\end{remark}}
\newcommand{\ble}{\begin{lemma}}
\newcommand{\ele}{\end{lemma}}
\newcommand{\bde}{\begin{definition}}
\newcommand{\ede}{\end{definition}}
\newcommand{\bco}{\begin{corollary}}
\newcommand{\eco}{\end{corollary}}
\newcommand{\bpr}{\begin{proposition}}
\newcommand{\epr}{\end{proposition}}
\newcommand{\bexer}{\begin{exercise}}
\newcommand{\eexer}{\end{exercise}}
\newcommand{\breh}{\begin{hint}}
\newcommand{\ereh}{\end{hint}}
\newcommand{\bexam}{\begin{example}}
\newcommand{\eexam}{\end{example}}
\newcommand{\bfi}{\begin{fig}}
\newcommand{\efi}{\end{fig}}
\newcommand{\beao}{\begin{eqnarray*}}
\newcommand{\eeao}{\end{eqnarray*}\noindent}
\newcommand{\beam}{\begin{eqnarray}}
\newcommand{\eeam}{\end{eqnarray}\noindent}
\begin{document}
\title{Time multidimensional Markov Renewal chains - An algebraic approach}

\author[1]{\small Leonidas Kordalis \thanks{\texttt{lkordali@math.uoa.gr}}}

\author[1,2]{Samis Trevezas\thanks{\texttt{strevezas@math.uoa.gr}}}

\affil[1]{\small Department of Mathematics, National and Kapodistrian University of Athens, Panepistimiopolis, 15784, Athens, Greece.}
\affil[2]{MICS laboratoty, CentraleSupélec - Université Paris-Saclay, 3 Rue Joliot Curie, Gif-sur-Yvette, 91190, France.}
\date{}
\maketitle
\begin{abstract}
In this study, a new extension of the Markov Renewal theory is introduced by allowing time to evolve in multiple dimensions. The resulting chains are referred to as multi-time Markov Renewal chains and since this extension is new, the state space is assumed to be finite to cover the theoretical framework of applications, where the possible number of states of a physical system is finite. The flexibility of Markov renewal theory is still present in multiple time dimensions by allowing the sojourn times in the different states of the system to be arbitrarily selected from a multidimensional distribution. The convolution product of multidimensional matrix sequences plays a particular role in the development of the theory and some of its algebraic properties are given and explored, paying particular attention to the existence, the representation and the computation  of the convolutional inverse. Some basic definitions and properties of this new class are given as well as the multi-time Markov renewal equations associated to the resulting processes. The practical implementation is achieved very efficiently through a novel adaptation of the Gauss-Jordan algorithm for multidimensional matrix sequences.
\end{abstract}

\maketitle

\textit{Markov renewal chains; semi-Markov chains;
Multi-time Markov renewal chains; Matrix convolution Convolutional inverse Gauss-Jordan algorithm Fast Fourier Transform (FFT).}
\vspace{3mm}

\section{Introduction} 
Markov renewal chains and their associated semi-Markov chains have been studied extensively in their classical formulation and have also found many applications in Reliability (e.g., \cite{barbu2006empirical}, \cite{barbu2004discrete}, \cite{math9161997}, \cite{d2021computation}, \cite{limnios2012semi}) and seismology (e.g \cite{pertsinidou2017application}, \cite{votsi2010semi}, \cite{votsi2021hypotheses}).
 The theoretical studies concern the development of the appropriate probabilistic framework and the associated statistical modeling and inference techniques in order to use them in practice. 
The probabilistic modelling in discrete time has recently received a lot of attention and there is a growing interest on the subject (\cite{barbu2009semi}). 
In (\cite{anselone1960ergodic}, \cite[]{girardin2006entropy}, \cite{howard1971dynamic}, \cite[]{moore1968estimation}, \cite{pyke1964limit}, \cite{Trev2}, \cite{cinlar}) the authors developed some aspects of the  theory of (discrete and continuous time) Markov renewal processes  such as  the asymptotic behavior of concrete classes of semi-Markov chains, central limit theorems for additive functionals of Markov renewal chains under several assumptions and the associated statistical inference using maximum likelihood and studying the asymptotic properties of the resulting estimators.

Renewal models in the time plane were introduced by J.J Hunter (\cite{hunter1974renewal}, \cite{hunter1974renewal1}, \cite[]{hunter1977renewal}) who defined, solved renewal equations in two variables  and studied the asymptotic behavior of a time-extended renewal model. In particular, Hunter obtained the  asymptotic joint distribution of the marginal counting processes, a strong law of large numbers for the bivariate counting process and gave the form of the associated renewal equations using Laplace transforms and convolution techniques for two-dimensional lifetimes. The main field of application in reliability concerns failure and repair processes, where two time variables naturally arise, such as operational and calendar time, or cumulative usage and load cycles (\cite{insua2020advances}, \cite{jack2009repair}, \cite{lai2006stochastic}, \cite{yang2001bivariate}). 

This paper aims to present a new extension of the well-known Markov renewal and semi-Markov chains, where the notion of time can be multidimensional. Our motivation is both theoretical and practical. On one hand, there is an important gap in the development of a general theory in the multi-time framework and on the other hand these theoretical results can serve as a guideline for practitioners to apply these models for several applications of interest.

A typical example from reliability theory where the ergodic behavior of the system plays a key role, involves multi-component repairable systems operating under variable load or environmental conditions. In such systems, transitions between states like operational, degraded, or failed are governed by multiple, coexisting time dimensions (e.g., calendar time, cumulative operating time, or stress cycles). Employing a multi-time Markov renewal structure allows for more accurate estimation of long-term reliability measures, such as steady-state availability, mean time to failure, and failure rate functions, under complex operational profiles (\cite{kovalenko1997}, \cite{limnios2012semi}).

D'Amico  (\cite{d2011age}, \cite{d2012weighted}, \cite{d2013wind}) presented a non-homogeneous age-usage semi-Markov model, which is mentioned as an indexed semi-Markov process, using a three dimensional process. However, in their approach, the notion of time is still one-dimensional. In general, time extension could be advantageous in situations where the semi-Markov property (or/and time homogeneity) doesn't hold in any of many possible definitions of time, but it can be recovered by tracking together all these time-like characteristics. Other situations, where a multi-time description could be beneficial include systems which allow zero-time events (batch arrivals, discretization effects).

Another motivating application comes from plant growth modeling and agriculture. Biological development processes often depend on multiple, concurrent time factors. For example, in agriculture, state transitions between growth stages (e.g., germination, leaf development, flowering, fruiting) are influenced by chronological time, accumulated thermal time (growing degree days), and photoperiod exposure. Capturing these dependencies within a stochastic, state-dependent framework fits naturally into the multi-time Markov renewal modeling approach, enabling the analysis of organogenesis, phenological progression, and resource allocation dynamics (\cite{kang2008}, \cite{morrison2008}).
From a methodological standpoint, a central mathematical tool in our development is the convolution product of multidimensional matrix sequences. The convolutional structure plays a fundamental role in describing the state evolution and solving the governing renewal equations. Furthermore, we explore key algebraic properties of the convolution operator, with particular emphasis on the existence, representation, and efficient computation of the convolutional inverse.

These computational challenges are addressed by proposing and analyzing FFT-based convolution methods \cite{d3ea2d52-5ab2-3128-8b80-efb85267295d}, along with a novel adaptation of the Gauss–Jordan algorithm for multidimensional matrix sequences. These tools enable the practical resolution of multi-time Markov renewal equations, making the proposed model applicable even for high-dimensional or large-state-space systems.

The structure of the paper is as follows. Section 2 introduces the algebraic framework and properties of multidimensional convolutions. Section 3 presents the formulation and properties of multi-time Markov renewal chains. Section 4 illustrates computational aspects and practical applications, followed by a discussion section. Finally, the appendices provide additional technical details related to the implementation of FFT-based convolution techniques.  
\section{Discrete multi-time convolution product} 

In this section we introduce the necessary notations and we study basic properties of the multidimensional discrete time convolution of real and matrix valued functions using algebraic techniques. Special mention is given 
to the existence, the representation and the computation of the convolutional inverse, which is necessary to obtain unique solutions for Markov renewal equations.


\subsection{Preliminaries}
 Let $E=\{1,2,\dots,s\}$ be a finite set, $\mathcal{M}_{s}:=\mathbb{R}^{s\times s}$
 the set of all real matrices on $E \times E$ and  $\mathcal{M}_{s}(\mathbb{N}^{d})$ the set of all $\mathcal{M}_{s}$-valued functions defined on $\mathbb{N}^{d}$.
 An element $A \in \mathcal{M}_{s}(\mathbb{N}^{d})$ 
 corresponds to a multidimensional sequence of matrices,
 where for fixed $k_{1:d} \in \mathbb{N}^{d}$, the matrix $A(k_{1:d})=\left(a_{ij}(k_{1:d})\right)_{i,j\in E}$.
 Notice also that $A$ can be represented equivalently
 as a matrix of real valued $d$-dimensional sequences, so $A=(a_{ij})_{i,j\in E}$, where $a_{ij}=a_{ij}(k_{1:d})_{k_{1:d}\in \mathbb{N}^{d}}\in R := \mathbb{R}^{\mathbb{N}^d}$. In this sense $A\in R^{s\times s}$, thus it is also a matrix with elements in a commutative ring. 
  We also reserve the notation $0_d$ for the null vector on $\mathbb{R}^{d}$, as well as $I_s$ and ${O}_s$ for the identity and the null matrix on $\mathcal{M}_{s}$ respectively. Additionally, the space $\mathbb{N}^d$ is equipped with the partial ordering
$k\leq l$, if and only if $k_u\leq l_u$ for all $1\leq u \leq d$, and we write $k < l$ if strict inequality
holds for at least one of its components.

\begin{definition} \label{def:convolution}
 The discrete $d$-dimensional \textit{convolution product} is a binary operation on $\mathcal{M}_{s}(\mathbb{N}^{d})$, where 
 for each $A,B \in \mathcal{M}_{s}(\mathbb{N}^{d})$, the element $A*B\in \mathcal{M}_{s}(\mathbb{N}^{d})$ is defined by 
\begin{equation}\label{def:convolution1}
[A*B]\left(k_{1:d}\right):= \sum_{l+l'= k_{1:d}}A(l) B(l'), \quad k_{1:d} \in \mathbb{N}^{d}.
\end{equation}
\end{definition} 
\begin{remark}\label{rem:convolution} In the above definition the emphasis is given on the representation
of $A*B$ as a multidimensional sequence of matrices. Nevertheless, it is also useful to interpret 
$A*B$ as a matrix of multidimensional sequences. Indeed, notice by (\ref{def:convolution1}) that 
		\begin{equation*}
        [A*B]_{ij}\left(k_{1:d}\right)= \sum_{r \in E}\sum_{l+l'= k_{1:d}}
		\alpha_{ir}(l)\beta_{rj}(l') , \quad i,j \in E ,\, k_{1:d} \in \mathbb{N}^{d},
    \end{equation*}
thus $A*B=\left([A*B]_{ij}\right)_{i,j\in E}$, where $
    [A*B]_{ij}=\sum_{r\in E}\alpha_{ir}*\beta_{rj}$.

In this way, the convolution of two elements in $\mathcal{M}_s(\mathbb{N}^{d})$ corresponds to the usual matrix multiplication in $R^{s\times s}$, with $R$-product
the convolution of real valued $d$-dimensional sequences.
\end{remark}
\begin{remark}
The computational complexity of the convolution product is an important practical issue to be addressed.
    Using the exact formula to compute $[A*B](l_{1:d})$ for all $d$-tuples of indices $0\leq l_{1:d}\leq k_{1:d}$ 
    has $\mathcal{O}(s^{3} \,k^{2}_{1}\cdots k^{2}_{d})$ complexity. An effective way to reduce this complexity is to use the Fast Fourier Transform (FFT)[\cite{d3ea2d52-5ab2-3128-8b80-efb85267295d}] for matrix sequences.
    
    In the case of real sequences $(s=1)$ with $\mathcal{O}(k^{2}_{1}\cdots k^{2}_{d})$ operations, it is reduced to $\mathcal{O}\left( \prod_{i=1}^{d}k_{i}\cdot \log_{2}(k_i) \right)$. It is well known that the convolution product of real sequences can be computed much faster in this manner. Thus, it is natural to extend this idea
for matrix sequences (see Appendix A).
   \end{remark} 
    
An important role in the development of the theory will be played by the time $d$-dimensional unitary function $\mathbbm{1}$, where $\mathbbm{1}(k_{1:d})=1$, for all $k_{1:d}\in \mathbb{N}^{d}$. 
Related to this function are also the column vector and the matrix $d$-dimensional sequences
$\mathbbm{1}_s:=(\mathbbm{1},\ldots,\mathbbm{1})^{\top}$ and
$\mathrm{dg}(\mathbbm{1}_s):=\mathrm{diag}\left\{\mathbbm{1}_s\right\}$ respectively. Furthermore, $\mathrm{dg}(\mathbbm{1}_s)$ corresponds to the component-wise summation operator, i.e.
\begin{equation*}
    [\mathrm{dg}(\mathbbm{1}_s)*A](k_{1:d})=\sum_{l=0_{d}}^{k_{1:d}}A(l),\quad k_{1:d}\in \mathbb{N}^{d}.
\end{equation*}
Additionally, the symbol $\mathbbm{1}_{s\times s}$ is reserved for the matrix sequence with components $\mathbbm{1}$. 
Finally, we define the matrix sequence $\mathbbm{I}_s$ given by
\begin{equation*}
     \mathbbm{I}_s(k_{1:d}) = \left\{ \begin{array}{ll}
        I_{s}, & \textrm{if $k_{1:d}=0_d$,}\\
       O_s, & \textrm{otherwise}.\end{array} \right. 
\end{equation*}

Below, we introduce the  convolutional powers of a matrix-valued function.
\begin{definition}\label{n-fold-m-1d}
Let $A\, \in \mathcal{M}_s(\mathbb{N}^{d})$ be a matrix-valued function and $n\in \mathbb{N}$. The \textit{$n$-fold convolution} of $A$, denoted by $A^{(n)}$, is defined as the matrix-valued function given by :
\begin{eqnarray*}
    A^{(0)} &:=& \mathbbm{I}_s \\
     A^{(n)}&:=& A*A^{(n-1)}, \quad n \geq 1. 
\end{eqnarray*}
From the above definition we can get directly that for all $n\geq 0$ and $k_{1:d}\in \mathbb{N}^{d}$,
\begin{equation}\label{gen-elem-n-fold-matrix}
    A^{(n)}(k_{1:d})=\sum_{\substack{l_{1},\ldots,l_{n}\in \mathbb{N}^{d} \\ l_{1}+\ldots+l_{n}=k_{1:d}}}A(l_{1})\cdots A(l_{n}).
\end{equation}
\end{definition}




\subsection{A representation of the convolutional inverse}

In the following lemma we give a useful property of the convolutional powers of a matrix-valued function $A$, when the condition $A(0_d)=O_s$ is satisfied. This condition simplifies the computation of the convolutional inverse of a matrix-valued function, whenever it exists.
\begin{lemma} \label{n<k1} Let $A\in \mathcal{M}_s(\mathbb{N}^{d})$ be a matrix-valued function with $A(0_d)=O_s$. Then, for any choice $k_{1:d}\in \mathbb{N}^d$, we have $A^{(n)}(k_{1:d})=O_s$, for all $n> k_1+\dots + k_d$.
\end{lemma}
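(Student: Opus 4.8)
The plan is to argue directly from the explicit representation (\ref{gen-elem-n-fold-matrix}) of the $n$-fold convolution, combined with a simple counting argument on the multi-indices. Write
$A^{(n)}(k_{1:d})$ as the sum over all tuples $(l_1,\dots,l_n)\in(\mathbb{N}^d)^n$ with $l_1+\dots+l_n=k_{1:d}$ of the matrix products $A(l_1)\cdots A(l_n)$. The first observation is that the hypothesis $A(0_d)=O_s$ forces any summand containing a factor $A(l_i)$ with $l_i=0_d$ to vanish, since a product of matrices in which one factor is the null matrix $O_s$ is again $O_s$. Hence only those tuples in which every $l_i$ is nonzero (i.e. $l_i\neq 0_d$) can contribute to the sum.

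The second step is to observe that such surviving tuples cannot exist once $n$ is large. For $l\in\mathbb{N}^d$ set $|l|:=l_1+\dots+l_d$, so that $l\neq 0_d$ is equivalent to $|l|\geq 1$. Summing the defining constraint $l_1+\dots+l_n=k_{1:d}$ componentwise gives
\[
|l_1|+\dots+|l_n| = k_1+\dots+k_d.
\]
If all the $l_i$ are nonzero, then each $|l_i|\geq 1$, and therefore $n\leq |l_1|+\dots+|l_n| = k_1+\dots+k_d$. Taking the contrapositive, whenever $n>k_1+\dots+k_d$ no admissible tuple can have all of its components nonzero; equivalently, every tuple satisfying $l_1+\dots+l_n=k_{1:d}$ must have at least one $l_i=0_d$.

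Combining the two steps, for $n>k_1+\dots+k_d$ every summand in (\ref{gen-elem-n-fold-matrix}) contains at least one factor equal to $O_s$ and hence vanishes, so $A^{(n)}(k_{1:d})=O_s$, as claimed. I do not expect a genuine obstacle here: the only points requiring care are the remark that a single null factor annihilates a matrix product, and the pigeonhole-type counting that a decomposition of $k_1+\dots+k_d$ into $n$ nonnegative parts cannot have more than $k_1+\dots+k_d$ strictly positive parts. An alternative route would be induction on $n$, peeling off the leading factor $A$ in the convolution $A^{(n)}=A*A^{(n-1)}$ and using $A(0_d)=O_s$ to shift the index bound, but the direct combinatorial argument above is the most transparent.
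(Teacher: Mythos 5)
Your argument is correct and is essentially the paper's own proof: both invoke the explicit representation (\ref{gen-elem-n-fold-matrix}) and observe that for $n>k_1+\dots+k_d$ every tuple $(l_1,\dots,l_n)$ summing to $k_{1:d}$ must contain some $l_i=0_d$, so the hypothesis $A(0_d)=O_s$ annihilates every summand. You merely make explicit the pigeonhole counting via $|l_1|+\dots+|l_n|=k_1+\dots+k_d$ that the paper leaves implicit, which is a welcome clarification but not a different route.
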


\begin{proof}
If $n>k_1+\dots +k_d$, then by (\ref{gen-elem-n-fold-matrix}) each product of $n$ terms which appears in the summand necessarily includes an $i$ with $l_{i}=0_d$. But, since $A(0_d)=O_s$ we infer that $A^{(n)}(k_{1:d})=O_s$.
\end{proof}
\begin{remark}\label{Rem-0e}
As an immediate consequence of the above lemma we get that for any fixed $k_{1:d}$, the matrix sequence  $A^{(n)}(k_{1:d})$ is eventually null. Besides,
since each term $\sum_{n}A^{(n)}(k_{1:d})$ is actually a finite sum, the matrix sequence given by the series $\sum_{n}A^{(n)}$ is well defined. 
\end{remark}
\begin{theorem}\label{algebraicp1} The $\mathbb{R}$-vector space $(\mathcal{M}_s(\mathbb{N}^{d}),+,\cdot)$, equipped with the bilinear product of the convolution operator $*$ given by Definition \ref{def:convolution}, is a unital associative $\mathbb{R}$-algebra, is isomorphic with the set of all $s\times s$ generating functions,
  it possesses $\mathbbm{I}_s$ as its unique identity element and it is commutative iff $s=1$. Furthermore, for $A\in \mathcal{M}_{s}(\mathbb{N}^{d})$, if $A(0_d)$ is singular, then $A$ has no left or right inverse, otherwise if $A(0_d)$ is non-singular, then $A$ is invertible with a common left and right inverse. In particular, the convolutional inverse $A^{(-1)}$ of $A$, whenever it  exists, is  given by 
\begin{equation}\label{form-matrix-inv-unit}
     A^{(-1)}=  \left[A(0_d)\right]^{-1}\sum_{n=0}^{\infty}(\mathbbm{I}_s-A_0)^{(n)}=\left[\sum_{n=0}^{\infty}(\mathbbm{I}_s-A^{*}_0)^{(n)}\right]\left[A(0_d)\right]^{-1},
 \end{equation}
 where
 \begin{eqnarray}
     A_{0}(k_{1:d})&=&\label{A0}A(k_{1:d})\left[A(0_d)\right]^{-1},\quad k_{1:d}\, \in \mathbb{N}^{d},\\
     A^{*}_{0}(k_{1:d})&=&\nonumber \left[A(0_d)\right]^{-1}A(k_{1:d}),\quad k_{1:d}\, \in \mathbb{N}^{d}.
 \end{eqnarray}
 Additionally, each term $A^{(-1)}(k_{1:d})$ in (\ref{form-matrix-inv-unit}) can be represented as a finite sum and is given by
 \begin{equation}\label{form-matrix-inv-unit-t}
         A^{(-1)}(k_{1:d})=  \left[A(0_d)\right]^{-1}\left[\sum_{n=0}^{k_1+\ldots +k_d}(\mathbbm{I}_s-A_0)^{(n)}(k_{1:d})\right]=\left[\sum_{n=0}^{k_1+\ldots +k_d}(\mathbbm{I}_s-A^{*}_0)^{(n)}(k_{1:d})\right]\left[A(0_d)\right]^{-1}.
    \end{equation}  
\end{theorem}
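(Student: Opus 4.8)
The plan is to exploit the identification recorded in Remark \ref{rem:convolution}: writing $R:=\mathbb{R}^{\mathbb{N}^d}$ endowed with the convolution of scalar $d$-dimensional sequences, the map $A\mapsto (a_{ij})_{i,j\in E}$ identifies $(\mathcal{M}_s(\mathbb{N}^d),+,*)$ with the matrix ring $M_s(R)$. First I would check that $(R,+,*)$ is a commutative ring whose identity is the sequence equal to $1$ at $0_d$ and $0$ elsewhere, and that $a\mapsto\sum_{k_{1:d}}a(k_{1:d})x^{k_{1:d}}$ is a ring isomorphism of $R$ onto the formal power series ring $\mathbb{R}[[x_1,\dots,x_d]]$; this yields at once the identification with $s\times s$ generating functions. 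Bilinearity of $*$ over $\mathbb{R}$, associativity, and the fact that $\mathbbm{I}_s$ (the identity matrix over $R$) is a two-sided unit are then inherited from the general theory of matrices over a commutative ring, and uniqueness of the identity is the standard one-line argument $e=e*e'=e'$. Commutativity is equivalent to $s=1$: for $s=1$ the algebra is the commutative ring $R$, while for $s\ge 2$ the usual elementary-matrix pair (placed at the origin) fails to commute.

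For the invertibility dichotomy the key device is the evaluation-at-the-origin map $\phi:\mathcal{M}_s(\mathbb{N}^d)\to\mathcal{M}_s$, $\phi(A)=A(0_d)$. Since the only decomposition $l+l'=0_d$ in $\mathbb{N}^d$ is $l=l'=0_d$, one has $\phi(A*B)=A(0_d)B(0_d)$, so $\phi$ is a unital $\mathbb{R}$-algebra homomorphism. Consequently, if $A$ admits a one-sided convolutional inverse $B$, applying $\phi$ to $A*B=\mathbbm{I}_s$ (or $B*A=\mathbbm{I}_s$) gives $A(0_d)B(0_d)=I_s$ (resp. $B(0_d)A(0_d)=I_s$), forcing $A(0_d)$ to be non-singular. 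This proves the contrapositive of the first assertion: if $A(0_d)$ is singular then no left or right inverse can exist.

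For the converse and the explicit formula I would construct the inverse directly. Assuming $A(0_d)$ non-singular, set $A_0$ as in (\ref{A0}); then $A_0(0_d)=I_s$, and, denoting by $[A(0_d)]\,\mathbbm{I}_s$ the sequence taking the value $A(0_d)$ at $0_d$ and $O_s$ elsewhere, a direct computation gives the factorisation $A=A_0*([A(0_d)]\,\mathbbm{I}_s)$. Because $(\mathbbm{I}_s-A_0)(0_d)=O_s$, Remark \ref{Rem-0e} guarantees that $\sum_{n\ge 0}(\mathbbm{I}_s-A_0)^{(n)}$ is a well-defined element of $\mathcal{M}_s(\mathbb{N}^d)$, and the telescoping identity $\big(\mathbbm{I}_s-(\mathbbm{I}_s-A_0)\big)*\sum_{n\ge 0}(\mathbbm{I}_s-A_0)^{(n)}=\mathbbm{I}_s$ (valid because the series is locally finite) shows that this series is the two-sided inverse of $A_0$. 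Inverting the factorisation and using that left multiplication by the origin-supported sequence $[A(0_d)]^{-1}\,\mathbbm{I}_s$ is exactly the pointwise left multiplication by $[A(0_d)]^{-1}$ yields the first equality in (\ref{form-matrix-inv-unit}); the symmetric factorisation through $A_0^{*}$ gives the second. Once a two-sided inverse is exhibited, any one-sided inverse must coincide with it via $B=\mathbbm{I}_s*B=(A^{(-1)}*A)*B=A^{(-1)}*(A*B)=A^{(-1)}$, which also settles uniqueness and the coincidence of the left and right inverses. Finally, the truncation (\ref{form-matrix-inv-unit-t}) is immediate from Lemma \ref{n<k1} applied to $\mathbbm{I}_s-A_0$, whose value at $0_d$ is $O_s$: at a fixed $k_{1:d}$ every term with $n>k_1+\dots+k_d$ vanishes.

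The step I expect to require the most care is the invertibility dichotomy itself, precisely because $\mathcal{M}_s(\mathbb{N}^d)$ is non-commutative for $s\ge 2$, so the existence of a one-sided inverse does not formally guarantee a two-sided one. The resolution is structural rather than computational: the homomorphism $\phi$ rules out one-sided inverses when $A(0_d)$ is singular, while the explicit geometric-series construction produces a genuine two-sided inverse when $A(0_d)$ is non-singular, and these two facts together close the gap. Equivalently, under the identification with $M_s(R)$ one may invoke that a square matrix over the commutative ring $R$ is invertible iff its determinant is a unit in $R$, and that $\det$ commutes with $\phi$, so that invertibility is governed entirely by $\det A(0_d)\ne 0$.
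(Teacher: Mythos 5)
Your proof is correct and takes essentially the same route as the paper: necessity of a non-singular $A(0_d)$ by evaluating the inverse equations at the origin (your homomorphism $\phi$ is exactly this observation, packaged structurally), sufficiency via the telescoping Neumann series for $A_0$ combined with the factorisation $A(k_{1:d})=A_0(k_{1:d})A(0_d)$, the monoid argument identifying one-sided inverses, and Lemma \ref{n<k1} for the finite truncation (\ref{form-matrix-inv-unit-t}). Your only genuine additions are that you spell out what the paper dismisses as immediate (the identification with $M_s(R)$, $R\cong\mathbb{R}[[x_1,\dots,x_d]]$, and the non-commutativity for $s\ge 2$ via matrix units at the origin), and the closing determinant-over-$R$ remark, which yields the invertibility dichotomy by a second route but not the explicit formula (\ref{form-matrix-inv-unit}).
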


\begin{proof}
The verification that $\mathcal{M}_s(\mathbb{N}^{d})$ equipped with the above operations is a unital associative $\mathbb{R}$-algebra having $\mathbbm{I}_s$ as its unique identity element is immediate. Now, notice that the non-singularity of $A(0_d)$ is a necessary condition for the existence of a left or a right inverse of $A$, since by setting $k_{1:d}=0_d$, the equations $B(0_d)A(0_d)=\mathbbm{I}_s(0_d)=I_s$ or $A(0_d)B(0_d)=I_s$ should hold respectively. Next, we can prove directly that the above condition is also sufficient for the existence of both a left and a right inverse by verifying that the matrix functions given in the right side of the first and the second equality of equation (\ref{form-matrix-inv-unit}) satisfy $B*A=\mathbbm{I}_s$ and $A*B=\mathbbm{I}_s $ respectively. Since $(\mathcal{M}_s(\mathbb{N}^{d}),*)$ is a monoid, if $A$ has both a left and a right inverse, then they necessarily coincide and a unique inverse $A^{(-1)}$ exists. The verification procedure is similar for both cases, so we only deal with the existence of a left inverse.
 Additionally, without loss of generality it suffices to prove that
 \begin{equation}\label{st-Al-inv}
  \left(\sum_{n=0}^{\infty}(\mathbbm{I}_s-A)^{(n)}\right) * A=\mathbbm{I}_s, \quad \textrm{if} \ A(0_d)=I_s.
 \end{equation}
  Indeed, if the latter holds, then the expression of $A^{(-1)}$ in the right side of the first equality of (\ref{form-matrix-inv-unit}) follows from the fact that $A(k_{1:d})=A_0(k_{1:d})A(0_d)$, where $A_0(k_{1:d})$ is given by (\ref{A0}), and $A_0$ satisfies (\ref{st-Al-inv}). Let us now proceed to the proof of (\ref{st-Al-inv}). Since $[\mathbbm{I}_s-A](0_d)=O_{s}$, by Remark (\ref{Rem-0e}) the series below is well defined and 
\begin{eqnarray*}
   \left(\sum_{n=0}^{\infty}(\mathbbm{I}_s-A)^{(n)}\right) * A &=& \left(\sum_{n=0}^{\infty}(\mathbbm{I}_s-A)^{(n)}\right) *[\mathbbm{I}_s-(\mathbbm{I}_s- A)]\\
   &=&\sum_{n=0}^{\infty}(\mathbbm{I}_s-A)^{(n)}\ - \sum_{n=0}^{\infty}(\mathbbm{I}_s-A)^{(n+1)}\\
   &=& (\mathbbm{I}_s-A)^{(0)}=\mathbbm{I}_s.
 \end{eqnarray*}
 Additionally, by Lemma \ref{n<k1} we get that (\ref{form-matrix-inv-unit-t}) holds.
\end{proof} 

\begin{remark}
Theorem \ref{algebraicp1} is a unified representation of the convolutional inverse for any class of sequences (real or matrix-valued) in several  variables.
In particular, if $f$ is a real sequence defined on $\mathbb{N}^{d}$ with $f(0_d)\neq 0$, then the corresponding convolutional inverse exists and is given by
\begin{equation}\label{form-real-inv-unit-t}
    f^{(-1)}(k_{1:d})=\frac{1}{f(0_d)}\left[\sum_{n=0}^{k_1+\ldots+k_d}\left(\mathbb{I}_1-\frac{1}{f(0_d)}f\right)^{(n)}(k_{1:d})\right],
\end{equation}
where $\mathbb{I}_1(0_d)=1$ and $0$ for $k_{1:d}\neq 0_d$.

\end{remark}

\subsection{Inversion with the classical recurrence}

In many cases, the form of a sequence is complex, making it difficult to compute
$A^{(-1)}$ (or $f^{(-1)}$), directly using expression such as   (\ref{form-matrix-inv-unit-t}), or (\ref{form-real-inv-unit-t}), However, it is possible to apply the following recursive method, which provides a practical alternative for such cases and will be referred to as the \textit{classical recurrence}. 
A proof for the one-variable case is given in \cite{barbu2009semi} and the extension to multiple time dimensions is straightforward. 
\begin{proposition} Let $A \in \mathcal{M}_{s}(\mathbb{N}^{d})$ be a convolutionally  invertible sequence. Its convolutional inverse $A^{(-1)}$ can be  computed recursively as follows:
   \[  A^{(-1)}(k_{1:d})= \left\{ \begin{array}{ll}
         \left[A(0_d)\right]^{-1},& \mbox{if $k_{1:d}=0_d,$}\\
         &\\
        \displaystyle -\,\left[A(0_{d})\right]^{-1}\left[\sum_{l< k_{1:d}}A^{(-1)}(l)A(k_{1:d}-l) \right], & \mbox{ otherwise}.\end{array} \right. \]  
\end{proposition}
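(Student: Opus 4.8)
The plan is to read the recurrence directly off the defining identity of the convolutional inverse, rather than off the closed forms \eqref{form-matrix-inv-unit} or \eqref{form-matrix-inv-unit-t}. Since $A$ is assumed convolutionally invertible, Theorem \ref{algebraicp1} guarantees that $A(0_d)$ is non-singular and that a unique two-sided inverse $A^{(-1)}$ exists, satisfying $A^{(-1)}*A = A*A^{(-1)} = \mathbbm{I}_s$. I would exploit one of these convolution identities pointwise to obtain a triangular system in the partial order on $\mathbb{N}^d$, from which each value $A^{(-1)}(k_{1:d})$ is expressed through values at strictly smaller indices.

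Concretely, expanding $A^{(-1)}*A = \mathbbm{I}_s$ by Definition \ref{def:convolution} gives, for every $k_{1:d}$, the relation $\sum_{l \le k_{1:d}} A^{(-1)}(l)\,A(k_{1:d}-l) = \mathbbm{I}_s(k_{1:d})$. Evaluating at $k_{1:d}=0_d$ leaves only the single term $A^{(-1)}(0_d)A(0_d)=I_s$, which yields the base case $A^{(-1)}(0_d)=[A(0_d)]^{-1}$. For $k_{1:d}\neq 0_d$ the right-hand side is $O_s$, and I would split the index set $\{l : l \le k_{1:d}\}$ into the extreme index $l=k_{1:d}$ and the remainder $\{l : l < k_{1:d}\}$, isolate the extreme term, and clear $A(0_d)$ by multiplication with $[A(0_d)]^{-1}$ to solve for $A^{(-1)}(k_{1:d})$ in terms of the lower-order values. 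The displayed recurrence is the outcome of exactly this computation, up to the side on which $[A(0_d)]^{-1}$ is placed and the ordering of the two matrix factors: working from $A^{(-1)}*A=\mathbbm{I}_s$ produces the summand $A^{(-1)}(l)A(k_{1:d}-l)$, while working from $A*A^{(-1)}=\mathbbm{I}_s$ isolates $A(0_d)A^{(-1)}(k_{1:d})$ and places $[A(0_d)]^{-1}$ on the left. Both descriptions yield the same sequence because the inverse is two-sided.

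The remaining point is to certify that this recursion is well posed. The key observation is that the right-hand side only ever calls $A^{(-1)}(l)$ for $l < k_{1:d}$, i.e.\ at indices strictly below $k_{1:d}$ in the componentwise order, and $l < k_{1:d}$ forces $l_1+\cdots+l_d < k_1+\cdots+k_d$. Hence an induction on the total degree $k_1+\cdots+k_d$, a well-founded grading of $(\mathbb{N}^d,\le)$, shows that every value is determined from strictly lower ones, so the formula defines a unique sequence; that this sequence is genuinely $A^{(-1)}$ follows because it satisfies the defining convolution identity by construction, together with the uniqueness of inverses in the monoid $(\mathcal{M}_s(\mathbb{N}^d),*)$ established in Theorem \ref{algebraicp1}.

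I expect the only real obstacle to be bookkeeping rather than ideas: correctly decomposing the partial-order sum $\sum_{l\le k_{1:d}}$ into its extreme index and the strictly smaller indices, and keeping the non-commutative matrix factors in their proper order while fixing the side on which $[A(0_d)]^{-1}$ acts. Since the one-variable statement is already established in \cite{barbu2009semi}, the substance of the multi-time case is precisely verifying that these manipulations carry over verbatim once the scalar predecessor ``$k-1$'' is replaced by the set $\{l < k_{1:d}\}$ of the partial order; no new analytic input is needed beyond the finiteness already recorded in Remark \ref{Rem-0e}.
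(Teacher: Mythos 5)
Your overall strategy is exactly the intended one: the paper itself supplies no argument for this proposition, deferring to the one-variable proof in \cite{barbu2009semi} and declaring the extension straightforward, and your pointwise expansion of the defining identity, the splitting of $\{l:\, l\leq k_{1:d}\}$ into the extreme index and $\{l:\, l< k_{1:d}\}$, and the induction on the total degree $k_1+\cdots+k_d$ (with uniqueness from the monoid structure of Theorem \ref{algebraicp1}) are precisely that extension. The base case and the well-posedness discussion are correct.

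There is, however, one step that does not hold up as written: your reconciliation of the derived recursion with the displayed formula. Expanding $A^{(-1)}*A=\mathbbm{I}_s$ and isolating the term $l=k_{1:d}$ yields
\begin{equation*}
A^{(-1)}(k_{1:d})=-\left[\sum_{l<k_{1:d}}A^{(-1)}(l)\,A(k_{1:d}-l)\right]\left[A(0_d)\right]^{-1},
\end{equation*}
with $\left[A(0_d)\right]^{-1}$ on the \emph{right}, while expanding $A*A^{(-1)}=\mathbbm{I}_s$ and isolating $l=0_d$ yields
\begin{equation*}
A^{(-1)}(k_{1:d})=-\left[A(0_d)\right]^{-1}\left[\sum_{l<k_{1:d}}A(k_{1:d}-l)\,A^{(-1)}(l)\right],
\end{equation*}
with the opposite summand ordering. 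Your remark that ``both descriptions yield the same sequence because the inverse is two-sided'' is true of these two recursions, but it does not justify the proposition's displayed formula, which \emph{mixes} them: left placement of $\left[A(0_d)\right]^{-1}$ together with the summand $A^{(-1)}(l)A(k_{1:d}-l)$. For $s\geq 2$ this mixed form is false in general: take $d=1$ and $A$ supported on $\{0,1\}$; then $A^{(-1)}(1)=-A(0)^{-1}A(1)A(0)^{-1}$, whereas the displayed formula returns $-A(0)^{-2}A(1)$, and these differ whenever $A(0)^{-1}$ and $A(1)$ do not commute. So the printed statement is itself a misstatement, valid only under extra commutation --- for instance in the case $A(0_d)=I_s$, which is the one actually used later for $u=(\mathbbm{I}_s-q)^{(-1)}$. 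Your proof should therefore conclude with one of the two correct one-sided recursions above and flag the misprint, rather than invoking two-sidedness to pass to the printed form; with that amendment the argument is complete.
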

This formulation enables the computation of each component of $A^{(-1)}$, iteratively, using previously computed terms and an additional multiplication with the inverse of $A(0_d)$.
The resulting complexity of the above method is $\mathcal{O}\left(s^{3} \prod_{i=1}^{d}k^{2}_i\right)$. Therefore, if we need to compute the convolutional inverse up to time $(2^{N},\ldots,2^{N})$, the complexity becomes to  $\mathcal{O}\left(s^{3}\cdot 4^{N\cdot d}\right)$.

\subsection{Newton's inversion method}
To reduce computational complexity, we employ matrix power series techniques. In the case of real sequences, the convolutional inverse can be efficiently computed using \textit{Newton's inversion method} \cite{Newton}.  Given a power series $$a(x_{1:d})=\sum_{k_{1:d}} f(k_{1:d}) x^{k_1}_1\cdots x^{k_d}_{d},$$ with $f(0_d)\neq 0$, we seek another power series $$b(x_{1:d})=\sum_{k_{1:d}} g(k_{1:d}) x^{k_1}_1\cdots x^{k_d}_{d}$$ such that \begin{equation}\label{identity}
    a(x_{1:d}) \cdot b(x_{1:d})=1,
\end{equation}
i.e., $b(x_{1:d})$ is the multiplicative inverse of $a(x_{1:d})$ or using the isomorphism in Theorem \ref{algebraicp1}, the coefficients $g$ form  the convolutional inverse of $f$.  
The performance of this computation can be significantly improved using Newton's iteration to achieve high precision very quickly.
First, we truncate (\ref{identity}) to a finite degree and take the approximation $b_{N}$ of $b$ such that %
\begin{equation*}
    a\cdot b_{N}= 1 \mod \left<x_{1},\ldots, x_d\right>^{2^N}.
\end{equation*}
The Newton's iterative method for approximating $b$ is given by:
\begin{equation}\label{Newton1}
  b_{N+1}  = b_{N}+b_{N}\cdot (2-a\cdot b_{N}) \mod \left<x_{1},\ldots, x_d\right>^{2^{N+1}},
\end{equation}
The proof of (\ref{Newton1}) is deferred to the Appendix \ref{1-2-3}. 
The  corresponding complexity up to time  $(2^{N},\ldots,2^{N})$, when using FFT, is reduced to $\mathcal{O}\left( N^{d}\cdot 2^{d\cdot N}\right)$. 

By adapting $(\ref{Newton1})$ to the matrix-valued case, we get similarly
\begin{equation}\label{Newton2}
  B_{N+1}  = B_{N}+B_{N}\cdot (2-A\cdot B_{N}) \mod \left<x_{1},\ldots, x_d\right>^{2^{N+1}},
\end{equation}
with initial condition $B_{0_d}=[A(0_d)]^{-1}$.
This formulation allows us to double the degree of approximation at each iteration, making the method particularly suitable for large-scale computations. As a result, the computational complexity for calculating the convolutional inverse is reduced to $\mathcal{O}\left(s^{3}\prod_{i=1}^{d}k_{i}\log_{2}(k_i)\right).$ As a result, the overall computational complexity of this procedure up to time $(2^{N},\ldots,2^{N})$ is $\mathcal{O}\left(s^{3}\cdot N^{d}\cdot 2^{d\cdot N}\right)$.

\subsection{Inversion with the Gauss-Jordan algorithm}
An alternative and often more practical method for computing the convolutional inverse is a convolution-based \textit{Gauss-Jordan algorithm}. To implement this, we first introduce the corresponding elementary row operations for multidimensional matrix sequences:
 \begin{itemize}
     \item  Swapping any two rows (vector sequences).
     \item Convolution of a row with a convolutionally invertible real sequence.
     \item  Replacing a row with the sum of itself and another row convolved with a convolutionally invertible real sequence. 
 \end{itemize}
   Using these operations, we introduce the following class of elementary matrix sequences.
    \begin{definition}
        A sequence  $E \in \mathcal{M}_{s}(\mathbb{N}^{d})$ is called \textit{elementary} if it is obtained by  applying a single row operation to the identity  sequence  $\mathbbm{I}_s$. Specifically:
        \begin{itemize}
            \item  Permutation matrix sequences produced by row switching operations on $\mathbbm{I}_s$, denoted by $E_{ij}$.
            \item Scalling matrix sequences, where  the $i$-th row  of $\mathbbm{I}_s$ is convolved with  a real sequence $\alpha$, denoted by $D_{i}[\alpha]$.
            \item  Row addition matrix sequences, where the $j$-th row is updated by adding the $i$-th row convolved with   a real sequence $\alpha$, denoted by $R_{ij}[\alpha]$.
        \end{itemize}

    \end{definition}
    \begin{lemma}
        
Let $A\in \mathcal{M}_{s}(\mathbb{N}^{d})$. Performing any  elementary row  operation on $A$  corresponds to  the convolution $E*A$, where $E$ is an appropriate  elementary matrix sequence  derived by $\mathbbm{I}_s$.
    \end{lemma}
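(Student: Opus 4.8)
The plan is to reduce the statement to a purely algebraic fact about matrices over a commutative ring, using the identification provided by Remark~\ref{rem:convolution}. There, $\mathcal{M}_s(\mathbb{N}^d)$ is identified with $R^{s\times s}$, where $R=\mathbb{R}^{\mathbb{N}^d}$ is the commutative ring of real $d$-dimensional sequences under convolution, whose multiplicative identity is $\mathbb{I}_1$ (the sequence equal to $1$ at $0_d$ and $0$ elsewhere). Under this identification $A$ becomes the matrix $(\alpha_{pq})_{p,q\in E}$ with entries in $R$, and the convolution $E*A$ becomes the ordinary matrix product over $R$, with entrywise formula $[E*A]_{pq}=\sum_{r\in E}\varepsilon_{pr}*\alpha_{rq}$, where $\varepsilon_{pr}$ denotes the $(p,r)$ entry of $E$. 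Thus it suffices to record the entries of each elementary sequence in $R$ and to verify that left multiplication reproduces the defining row operation — the familiar statement from Gaussian elimination, now carried out over $R$ rather than over a field.

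First I would write down the three elementary sequences explicitly. Writing $e_p$ for the $p$-th row of $\mathbbm{I}_s$ (the row vector over $R$ with $\mathbb{I}_1$ in position $p$ and $0$ elsewhere): $E_{ij}$ agrees with $\mathbbm{I}_s$ except that rows $i$ and $j$ are interchanged; $D_i[\alpha]$ agrees with $\mathbbm{I}_s$ except that its $(i,i)$ entry equals $\alpha$; and $R_{ij}[\alpha]$ agrees with $\mathbbm{I}_s$ except that its $(j,i)$ entry equals $\alpha$, reflecting that row $j$ is to be updated using row $i$.

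Applying the entrywise formula in each case is then immediate, the key point being that $\mathbb{I}_1$ is the unit of $R$, so that $\mathbb{I}_1*\alpha_{rq}=\alpha_{rq}$. For $E_{ij}$, each row of the permutation sequence carries a single $\mathbb{I}_1$, giving $[E_{ij}*A]_{pq}=\alpha_{\sigma(p),q}$ with $\sigma$ the transposition of $i$ and $j$; that is, rows $i$ and $j$ of $A$ are swapped. For $D_i[\alpha]$, the only altered row is the $i$-th, where $[D_i[\alpha]*A]_{iq}=\alpha*\alpha_{iq}$, so row $i$ is convolved with $\alpha$. For $R_{ij}[\alpha]$, the only altered row is the $j$-th, where $[R_{ij}[\alpha]*A]_{jq}=\mathbb{I}_1*\alpha_{jq}+\alpha*\alpha_{iq}=\alpha_{jq}+\alpha*\alpha_{iq}$, so row $j$ is replaced by itself plus row $i$ convolved with $\alpha$. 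Each of these is exactly the corresponding elementary row operation, which establishes the lemma.

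There is essentially no deep obstacle here, the result being a ring-theoretic analogue of a textbook fact; the work is entirely in the bookkeeping. The two points that require care are fixing the index convention so that $R_{ij}[\alpha]$, which updates row $j$ using row $i$, carries $\alpha$ in position $(j,i)$ rather than $(i,j)$, and observing that the order of the factor $\alpha$ relative to $\alpha_{rq}$ is immaterial since $R$ is commutative, so that ``the row convolved by $\alpha$'' is unambiguous. Note finally that the convolutional invertibility of $\alpha$ required in the definition of the scaling and row-addition operations plays no role in the correspondence itself; it is needed only to guarantee that the elementary sequences are themselves convolutionally invertible, a fact used downstream in the Gauss--Jordan procedure.
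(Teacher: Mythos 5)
Your proof is correct and follows exactly the route the paper intends: the paper states this lemma without proof, evidently regarding it as immediate from the identification of $\mathcal{M}_s(\mathbb{N}^d)$ with $R^{s\times s}$, $R=\mathbb{R}^{\mathbb{N}^d}$ under convolution, given in Remark~2.1, and your write-up simply makes that verification explicit (with the correct placement of $\alpha$ in the $(j,i)$ entry of $R_{ij}[\alpha]$ and the correct observation that invertibility of $\alpha$ is irrelevant to the correspondence itself). No gaps; nothing to change.
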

    \begin{lemma}
        Any elementary matrix sequence is convolutionally invertible, and its inverse is also an  elementary matrix sequence. Specifically: \begin{itemize}
            \item [$\bullet$]$E^{(-1)}_{ij}=E_{ij}$,
                        \item [$\bullet$]$D^{(-1)}_{i}[\alpha]=D_{i}[\alpha^{(-1)}]$,
            \item [$\bullet$]$R^{(-1)}_{ij}[\alpha]=R_{ij}[-\alpha]$.

        \end{itemize}
    \end{lemma}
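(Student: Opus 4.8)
The plan is to invoke the invertibility criterion established in Theorem~\ref{algebraicp1}: a sequence $E\in\mathcal{M}_s(\mathbb{N}^d)$ admits a two-sided convolutional inverse if and only if the real matrix $E(0_d)$ is non-singular. So the first step I would take is to write down each elementary sequence explicitly as a matrix of real sequences and read off its value at $0_d$. Since $\mathbbm{I}_s$ equals $I_s$ at $0_d$ and $O_s$ elsewhere, swapping rows $i$ and $j$ produces a sequence $E_{ij}$ supported only at $0_d$, with $E_{ij}(0_d)=P_{ij}$ the transposition matrix. Convolving the $i$-th row of $\mathbbm{I}_s$ with $\alpha$ turns the $(i,i)$ entry $\mathbb{I}_1$ into $\alpha*\mathbb{I}_1=\alpha$, so $D_i[\alpha]$ is diagonal with $(i,i)$ entry $\alpha$ and all other diagonal entries $\mathbb{I}_1$; hence $D_i[\alpha](0_d)$ is diagonal with $\alpha(0_d)$ in position $(i,i)$ and $1$ elsewhere. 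Finally, $R_{ij}[\alpha]$ coincides with $\mathbbm{I}_s$ except for the extra $(j,i)$ entry equal to $\alpha$, so $R_{ij}[\alpha](0_d)=I_s+\alpha(0_d)\,e_j e_i^{\top}$.

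Next I would confirm non-singularity at $0_d$ in each case. The matrix $P_{ij}$ is a permutation matrix, hence non-singular; $R_{ij}[\alpha](0_d)$ is unitriangular (using $i\neq j$) with determinant $1$; and $D_i[\alpha](0_d)$ is non-singular exactly when $\alpha(0_d)\neq0$, which is guaranteed because the underlying row operation requires $\alpha$ to be a convolutionally invertible real sequence, equivalent to $\alpha(0_d)\neq0$ by the remark following Theorem~\ref{algebraicp1}. Theorem~\ref{algebraicp1} then yields that all three elementary sequences are convolutionally invertible with a common left and right inverse.

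Then I would identify those inverses. Since the inverse is unique, it suffices to check that each proposed candidate convolves with the given sequence to $\mathbbm{I}_s$. For $E_{ij}$, the sequence is concentrated at $0_d$, so $E_{ij}*E_{ij}$ is also concentrated at $0_d$ with value $P_{ij}^2=I_s$, giving $E_{ij}*E_{ij}=\mathbbm{I}_s$. For $D_i[\alpha]$, diagonal sequences convolve entrywise, so $D_i[\alpha]*D_i[\alpha^{(-1)}]=D_i[\alpha*\alpha^{(-1)}]=D_i[\mathbb{I}_1]=\mathbbm{I}_s$, where $\alpha^{(-1)}$ exists by the same hypothesis on $\alpha$. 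For $R_{ij}[\alpha]$, writing $R_{ij}[\alpha]=\mathbbm{I}_s+N$ with $N$ the sequence having $\alpha$ in position $(j,i)$ and zero elsewhere, I would expand $R_{ij}[\alpha]*R_{ij}[-\alpha]=(\mathbbm{I}_s+N)*(\mathbbm{I}_s-N)=\mathbbm{I}_s-N*N$; since $i\neq j$ the square $N*N$ vanishes because $e_i^{\top}e_j=0$, so the result is $\mathbbm{I}_s$.

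The computations themselves are routine, so I expect the main obstacle to be purely organizational: keeping the index conventions and the restriction $i\neq j$ straight, in particular recognising that the cross terms in the $R_{ij}$ product cancel and that the remaining square term annihilates precisely because the two elementary matrices act on disjoint row and column indices. As a conceptual cross-check, each formula can also be read off from the preceding lemma: applying an elementary row operation and then its natural inverse operation (re-swapping the two rows, convolving with $\alpha^{(-1)}$, or adding $-\alpha$ times the same row) returns $\mathbbm{I}_s$, and by that lemma this composition equals the convolution of the corresponding elementary sequences, reproducing $E_{ij}^{(-1)}=E_{ij}$, $D_i[\alpha]^{(-1)}=D_i[\alpha^{(-1)}]$ and $R_{ij}[\alpha]^{(-1)}=R_{ij}[-\alpha]$.
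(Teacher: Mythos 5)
Your proposal is correct: the paper states this lemma without proof (treating it as immediate), and your argument --- reading off each elementary sequence's value at $0_d$, invoking Theorem~\ref{algebraicp1} for two-sided invertibility, and then verifying one-sided products ($P_{ij}^2=I_s$ at $0_d$, entrywise convolution on diagonals, and $N*N=O$ since $e_i^{\top}e_j=0$ for $i\neq j$) --- is exactly the straightforward verification the authors leave to the reader. The logic of checking only one side after establishing invertibility is sound, since in the monoid $(\mathcal{M}_s(\mathbb{N}^d),*)$ a one-sided inverse of an invertible element must coincide with its unique two-sided inverse.
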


We now formally define row equivalence for matrix sequences.

    \begin{definition}
        Two sequences $A,B\in \mathcal{M}_{s}(\mathbb{N}^{d})$ are called \textit{row equivalent} if one can be obtained from  the other by  applying a finite sequence of elementary row operations.
    \end{definition}
\begin{proposition}
    Let $A,B\in \mathcal{M}_{s}(\mathbb{N}^{d})$. The sequences $A$ and $B$ are equivalent if there exists a finite $k\in \mathbb{N}$ and elementary sequences $C_{1},\ldots C_{k}$ such that
        $B=C_{k}*\ldots * C_{1}* A$.
\end{proposition}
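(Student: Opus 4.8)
The plan is to recognize this proposition as the purely algebraic reformulation of the definition of row equivalence stated just above, and to prove the characterization by translating elementary row operations into left-convolutions by elementary matrix sequences and back. The only external facts I would need are the two lemmas immediately preceding the statement — that each elementary row operation on $A$ is realized as a convolution $E*A$ with an appropriate elementary sequence, and that every elementary sequence is convolutionally invertible with an elementary inverse — together with the associativity of $*$ established in Theorem \ref{algebraicp1}.

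First I would prove that row equivalence yields such a factorization. Suppose $B$ is obtained from $A$ by applying a finite sequence of elementary row operations $O_1,\ldots,O_k$, performed in that order. By the first preceding lemma, each $O_i$, acting on the current sequence, is realized as a left-convolution $C_i * (\cdot)$ with an elementary matrix sequence $C_i$. Applying the operations successively and invoking associativity of the convolution product, the outcome of the whole procedure is
\[
B = C_k * \bigl(C_{k-1} * \cdots * (C_1 * A)\bigr) = C_k * \cdots * C_1 * A,
\]
which is the claimed expression. The only point demanding care is the bookkeeping of the order: the operation performed last, $O_k$, corresponds to the leftmost factor $C_k$.

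Conversely, I would show that any factorization of this form exhibits $A$ and $B$ as row equivalent. Given $B = C_k * \cdots * C_1 * A$ with each $C_i$ elementary, the same lemma lets us read off, for each factor $C_i$, the elementary row operation whose effect is left-convolution by $C_i$; performing these operations on $A$ in the order $C_1,\ldots,C_k$ reproduces $B$, so $A$ and $B$ are row equivalent by definition.

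The step I expect to require the most care — and the one that genuinely justifies the symmetric term \emph{equivalent} — is verifying symmetry of the relation. For this I would invoke the second preceding lemma, which guarantees that each $C_i$ is convolutionally invertible with an elementary inverse $C_i^{(-1)}$. Convolving the factorization on the left successively by $C_k^{(-1)},\ldots,C_1^{(-1)}$, and using associativity together with $C_i^{(-1)} * C_i = \mathbbm{I}_s$, yields
\[
A = C_1^{(-1)} * \cdots * C_k^{(-1)} * B,
\]
a factorization of the same form with elementary factors, so $B$ is row equivalent to $A$ as well. Reflexivity (take $k=0$, giving $B=\mathbbm{I}_s * A=A$) and transitivity (concatenate the two factorizations and relabel) then follow immediately, confirming that left-convolution by products of elementary sequences is an equivalence relation. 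Beyond this symmetry verification I anticipate no real obstacle, since the entire argument is a direct transcription between row operations and convolutional factors.
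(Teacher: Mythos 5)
Your proof is correct and matches the paper's intent exactly: the paper states this proposition without proof, treating it as an immediate translation of the definition of row equivalence via the two preceding lemmas (row operations as left-convolutions $E*A$, and invertibility of elementary sequences), which is precisely the argument you spell out, including the correct order bookkeeping and the use of associativity from Theorem \ref{algebraicp1}. Your symmetry/reflexivity/transitivity verification is sound as well, though the paper defers that to its separate ``Equivalence Relation'' proposition rather than folding it into this one.
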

Next, we define the convolutional analogue of the row echelon form.
\begin{definition} A sequence $A\in \mathcal{M}_{s}(\mathbb{N}^{d})$ is in \textit{convolutional row echelon form} if each of its component matrices is in row  echelon form. In particular, $A$ is said to be in \textit{reduced  convolutional row echelon form}  if $A(0_d)$ is in reduced echelon form and all other  matrices in the sequence  are zero.
    
\end{definition}
\begin{proposition}[Equivalence Relation]
The row equivalence relation defined on $\mathcal{M}_{s}(\mathbb{N}^{d})$ is an equivalence relation.
    
\end{proposition}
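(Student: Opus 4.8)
The plan is to verify directly the three defining properties of an equivalence relation---reflexivity, symmetry, and transitivity---on $\mathcal{M}_{s}(\mathbb{N}^{d})$, using the product characterization of row equivalence established in the preceding proposition together with the two elementary-matrix lemmas. Throughout I would work with the representation $B = C_{k} * \cdots * C_{1} * A$ for elementary sequences $C_{1},\dots,C_{k}$, and rely on the associativity of the convolution product $*$ guaranteed by Theorem \ref{algebraicp1}.

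For reflexivity, I would observe that the scaling sequence $D_{1}[\mathbbm{I}_{1}]$ obtained by convolving the first row of $\mathbbm{I}_{s}$ with the real unit $\mathbbm{I}_{1}$ equals $\mathbbm{I}_{s}$, so that $A = \mathbbm{I}_{s} * A$ exhibits $A$ as row equivalent to itself (equivalently, one may admit the empty sequence of operations, corresponding to $k=0$). For symmetry, suppose $B = C_{k} * \cdots * C_{1} * A$ with each $C_{i}$ elementary. The inversion lemma for elementary sequences gives that each $C_{i}$ is convolutionally invertible with $C_{i}^{(-1)}$ again elementary. Convolving on the left successively by $C_{k}^{(-1)},\dots,C_{1}^{(-1)}$ and invoking associativity, I obtain $A = C_{1}^{(-1)} * \cdots * C_{k}^{(-1)} * B$. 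Since the right-hand side is a finite product of elementary sequences acting on $B$, this shows that $A$ is obtained from $B$ by a finite sequence of elementary row operations, confirming symmetry.

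For transitivity, if additionally $C = D_{m} * \cdots * D_{1} * B$ with $D_{1},\dots,D_{m}$ elementary, then substituting the expression for $B$ yields $C = D_{m} * \cdots * D_{1} * C_{k} * \cdots * C_{1} * A$, which is once more a finite product of elementary sequences applied to $A$; hence $A$ and $C$ are row equivalent. The three properties together establish the claim.

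The only genuinely non-formal point---and therefore the step I would treat most carefully---is the symmetry argument: it is precisely here that one needs both that the inverse of each elementary factor is itself elementary (the inversion lemma) and that the convolution is associative, so that the product $C_{k} * \cdots * C_{1}$ may be inverted factor by factor in reversed order, producing a genuine finite sequence of elementary operations. Reflexivity and transitivity are then routine bookkeeping with finite compositions.
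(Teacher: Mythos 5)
Your proof is correct and is exactly the routine verification the paper intends: the paper states this proposition without proof, treating it as immediate from the preceding lemmas, and your argument supplies the standard details, with the key step (symmetry) resting precisely on the lemma that each elementary sequence has an elementary convolutional inverse, together with associativity of $*$ from Theorem~\ref{algebraicp1}. The only cosmetic point is notational: the paper writes the scalar convolution unit as $\mathbb{I}_1$, so reflexivity reads $A=\mathbbm{I}_s * A$ with $\mathbbm{I}_s = D_1[\mathbb{I}_1]$, or equivalently one admits $k=0$ in the product characterization.
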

\begin{corollary} \label{help} For any $A\in \mathcal{M}_{s}(\mathbb{N}^{d})$ there exists an integer $k \in \mathbb{N}$ and a sequence of  elementary sequences of matrices $C_{1},\ldots, C_{k}$, such that the sequence $C_{k}*\ldots *C_{1}*A$ is in reduced convolutional  echelon form.
\end{corollary}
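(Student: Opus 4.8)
The plan is to realize the statement as a Gauss--Jordan elimination carried out over the commutative ring $R=\mathbb{R}^{\mathbb{N}^d}$ of real $d$-dimensional sequences under convolution. By Theorem \ref{algebraicp1} this ring is local: a real sequence $\alpha$ is convolutionally invertible exactly when $\alpha(0_d)\neq 0$, so the non-units are precisely the sequences vanishing at $0_d$. Identifying $A$ with a matrix in $R^{s\times s}$, the two lemmas above tell us that each admissible row operation is left convolution by an elementary sequence and that every such sequence is invertible; hence it suffices to exhibit a finite chain of elementary row operations transforming $A$ into a sequence whose $0_d$-slice is in reduced row echelon form and all of whose higher slices vanish. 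I would construct this chain by an induction that processes the columns from left to right.

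For the inductive step I would run the classical pivoting scheme, the only twist being that a usable pivot must be a unit of $R$. Since $\alpha$ is a unit iff $\alpha(0_d)\neq 0$, the search for a pivot in a given column reduces to locating a nonzero entry in the corresponding column of the real matrix $A(0_d)$; this is the point where the evaluation homomorphism $\alpha\mapsto\alpha(0_d)$ does all of the bookkeeping. Having found a unit entry, I would move it into pivot position with a permutation sequence $E_{ij}$, normalize it to $\mathbb{I}_1$ by convolving its row with its convolutional inverse via $D_i[\cdot]$ (legitimate precisely because the pivot is a unit), and then annihilate every other entry of that column with the row-addition sequences $R_{ij}[-a * p^{(-1)}]$, where $p$ is the pivot and $a$ the entry to be cleared. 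Iterating over the columns drives the $0_d$-slice into reduced row echelon form, and in the non-degenerate case where $A(0_d)$ is non-singular every column acquires a unit pivot, so the terminal sequence is $\mathbbm{I}_s$, which is indeed in reduced convolutional row echelon form; the associated product of elementary sequences is then the convolutional inverse $A^{(-1)}$, recovering the formula of Theorem \ref{algebraicp1}.

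The step I expect to be the genuine obstacle is guaranteeing the vanishing of all the higher-order slices, i.e. the second half of the target form. Over a field every nonzero entry is a legal pivot, but over $R$ only units may be scaled or used to clear a column; a nonzero yet non-invertible entry, one supported away from $0_d$, can neither be normalized nor eliminated by the permitted operations, since convolution at the argument $0_d$ only ever sees the $0_d$-terms. Consequently the reduction can force all higher slices to zero only when it succeeds in placing a unit pivot in each pivot column, and that ability is governed entirely by the rank of $A(0_d)$. I would therefore organize the argument around an inductive invariant that records, at each stage, that the already-processed block carries unit pivots and that the unprocessed lower-right block, evaluated at $0_d$, has the expected rank; the crux is to verify that clearing a column with a unit pivot preserves this invariant, and this is exactly the place where the commutativity of $R$ and its local-ring structure are indispensable.
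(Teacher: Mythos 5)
You have correctly located the crux, but you should know that the obstacle you flag in your final paragraph is not something a better inductive invariant can overcome: the corollary is false as stated, and no proof of it appears in the paper either (it is asserted without argument). Take $s=1$, $d=1$ and $A$ the sequence with $A(0)=0$, $A(1)=1$ and $A(k)=0$ otherwise. The only elementary operations on a single row are convolutions with units $\alpha$ (those with $\alpha(0)\neq 0$), so every sequence row equivalent to $A$ has the form $u*A$ with $u$ a unit of $R$; since $R$ is an integral domain (it is isomorphic to the formal power series ring via the generating-function isomorphism of Theorem \ref{algebraicp1}), $u*A\neq 0$, while $(u*A)(0)=u(0)A(0)=0$. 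For $s=1$ the reduced convolutional echelon forms are exactly the zero sequence and $\mathbb{I}_1$, so $u*A$ is never in reduced form. The same obstruction persists for every $s$: elementary operations are invertible over $R$, hence preserve the $R$-module generated by the rows, and a sequence in reduced convolutional echelon form has row module generated by vectors supported at $0_d$; but for $A=\mathrm{diag}(x,0,\ldots,0)$, where $x$ denotes the sequence above, the row module contains no nonzero vector supported at $0_d$. So your proposed invariant (``the unprocessed block, evaluated at $0_d$, has the expected rank'') simply cannot be established at the start for general $A$ — exactly as your own analysis of non-unit pivots predicts.

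The constructive part of your proposal is, however, a complete and correct proof of the corollary under the hypothesis that $A(0_d)$ is nonsingular: every column search then produces a unit pivot via the evaluation homomorphism $\alpha\mapsto\alpha(0_d)$, the pivot is normalized by $D_i[p^{(-1)}]$ and the column cleared by $R_{ij}[-a*p^{(-1)}]$, and the terminal sequence is $\mathbbm{I}_s$. This restricted version is all the paper ever uses: in the proof of Proposition \ref{Gausseq}, the corollary is invoked only for convolutionally invertible $A$, for which Theorem \ref{algebraicp1} guarantees $A(0_d)$ nonsingular, so your argument fully supports the implication $(i)\Longrightarrow(ii)$ there. Two minor remarks. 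First, your clearing multiplier $-a*p^{(-1)}$ need not be invertible, which formally conflicts with the paper's definition of the row-addition operation (it demands an invertible $\alpha$); but $R_{ij}[\alpha]$ is invertible for every $\alpha$, and the paper's own $2\times 2$ example applies $R_{12}[-c]$ with arbitrary $c$, so that restriction in the definition is spurious and you are right to ignore it. Second, the correct general statement one could substitute for the corollary would have to weaken the target normal form (only the $0_d$-slice in echelon form, without annihilating the higher slices), which is what ``each component matrix in row echelon form'' fails to capture for non-unit columns; as written, the gap is the paper's, and your proposal is honest in refusing to paper over it.
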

We now present an important characterization of convolutional invertibility for matrix sequences.
\begin{proposition} \label{Gausseq}
    For a sequence  $A\in \mathcal{M}_{s}(\mathbb{N}^{d})$, the following statements are equivalent.
    \begin{itemize}
        \item [$(i)$] $A$ has a convolutional inverse.
        \item [$(ii)$] $A$ is row equivalent to the identity sequence  $\mathbb{I}_s$.
        \item [$(iii)$] $A$ can be written as the convolution of a finite sequence of elementary matrix sequences.
    \end{itemize}
\end{proposition}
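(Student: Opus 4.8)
The plan is to prove the three statements equivalent through the cyclic chain $(i)\Rightarrow(ii)\Rightarrow(iii)\Rightarrow(i)$, transporting the classical Gauss--Jordan argument for invertible matrices over a field into the convolution algebra $(\mathcal{M}_s(\mathbb{N}^d),*)$. The two preceding lemmas on elementary sequences (that every elementary row operation is realized as a left-convolution $E*A$, and that every elementary sequence is convolutionally invertible with elementary inverse), together with Corollary~\ref{help} and the invertibility criterion of Theorem~\ref{algebraicp1}, supply all the ingredients.

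For $(i)\Rightarrow(ii)$, which I regard as the crux, I would first invoke Corollary~\ref{help} to obtain elementary sequences $C_1,\dots,C_k$ such that $R:=C_k*\cdots*C_1*A$ is in reduced convolutional echelon form. Since $A$ is convolutionally invertible and each $C_i$ is invertible, $R$ is a convolution of invertible sequences and is therefore itself convolutionally invertible; by Theorem~\ref{algebraicp1} this forces $R(0_d)$ to be non-singular. But the definition of reduced convolutional echelon form says precisely that $R(0_d)$ is a matrix in reduced row echelon form while $R(k_{1:d})=O_s$ for every $k_{1:d}\neq 0_d$. A non-singular matrix in reduced row echelon form can only be $I_s$, hence $R(0_d)=I_s$ and so $R=\mathbbm{I}_s$, exhibiting $A$ as row equivalent to the identity sequence.

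The remaining two implications are routine. For $(ii)\Rightarrow(iii)$, row equivalence to $\mathbbm{I}_s$ yields elementary sequences $C_1,\dots,C_k$ with $\mathbbm{I}_s=C_k*\cdots*C_1*A$; since $C_k*\cdots*C_1$ is invertible with inverse $C_1^{(-1)}*\cdots*C_k^{(-1)}$, associativity gives $A=C_1^{(-1)}*\cdots*C_k^{(-1)}$, a convolution of elementary sequences (the inverses being elementary by the second lemma). For $(iii)\Rightarrow(i)$, if $A=D_1*\cdots*D_m$ with each $D_j$ elementary, then each $D_j$ is convolutionally invertible, and in the monoid $(\mathcal{M}_s(\mathbb{N}^d),*)$ the convolution of invertible elements is invertible with inverse $D_m^{(-1)}*\cdots*D_1^{(-1)}$, so $A$ has a convolutional inverse.

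The main obstacle is the bookkeeping in $(i)\Rightarrow(ii)$: one must ensure that invertibility is genuinely transported through the full chain of elementary operations, so that $R(0_d)$ inherits non-singularity, and then combine this with the structural constraint imposed by the reduced convolutional echelon form to pin $R$ down to exactly $\mathbbm{I}_s$. Everything else reduces to manipulating convolutional inverses of elementary sequences, which the preceding lemmas already make available.
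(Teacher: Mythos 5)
Your proposal is correct and follows essentially the same route as the paper: the easy implications $(ii)\Rightarrow(iii)\Rightarrow(i)$ from the invertibility of elementary sequences, and $(i)\Rightarrow(ii)$ via Corollary~\ref{help} together with the non-singularity criterion of Theorem~\ref{algebraicp1} applied to the reduced convolutional echelon form. If anything, your version is slightly tighter than the paper's, since you pin down the echelon form to be exactly $\mathbbm{I}_s$ (a non-singular matrix in reduced row echelon form must equal $I_s$), whereas the paper only observes that $B(0_d)$ has no zero rows and concludes equivalence to the identity.
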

\begin{proof}
\

\begin{itemize}
    \item [$\bullet$] $(ii)\Longrightarrow (iii)$ and $(iii)\Longrightarrow (i)$ follow directly from the definitions and the fact that elementary sequences are convolutionally invertible.
    \item[$\bullet$]  For $(i)\Longrightarrow (ii)$, we apply  Corollary \ref{help} to bring $A$ into reduced convolutional echelon form. Denoting this form  as $B$, we write  $B=C_{k}*\ldots*C_{1}*A.$ Since,  each $C_i$ and $A$ are invertible, $B$ also has a convolutional inverse. By Theorem \ref{algebraicp1}, this implies that $B(0_d)$ cannot contain zero rows, meaning $B$ is equivalent to $\mathbb{I}_s$. Therefore, by transitivity, $A$ is equivalent to $\mathbb{I}_s$.
\end{itemize}

\end{proof}
 Proposition \ref{Gausseq} indicates the following equivalence:
\begin{equation*}
  \mathbb{I}_s=C_{k}*\ldots * C_{1}* A \ \ \Longleftrightarrow  \ \ A^{(-1)}=C_{k}*\ldots * C_{1}.
\end{equation*}
Therefore, we can compute $A^{(-1)}$, starting from $\mathbb{I}_s$,  by applying the same sequence of elementary matrix operations that transforms $A$ into $\mathbb{I}_s$.

Let us give an example. Consider the sequence $A\in \mathcal{M}_{2}(\mathbb{N}^{d})$, where  
$A=\begin{pmatrix}
   a & b \\
   c & d
\end{pmatrix}$.

Then, the convolutional inverse of $A$ can be computed through the following sequence of operations:

$\begin{pmatrix}
   a \ & \ b \\
   c\ & \ d
\end{pmatrix}$$\xrightarrow{D_{1}[\alpha^{(-1)}]}{}$ $\begin{pmatrix}
   \mathbb{I}_1 \ & \ b*\alpha^{(-1)} \\
   c & d
\end{pmatrix}$ $\xrightarrow{R_{12}[-c]}{}$  $\begin{pmatrix}
   \mathbb{I}_1\ & \ b*\alpha^{(-1)} \\
   0 \ &\ d-b*\alpha^{(-1)}*c
\end{pmatrix}$  $\xrightarrow{D_{2}\left[\left(d-b*\alpha^{(-1)}*c\right)^{(-1)}\right]}{{}}$

$\begin{pmatrix}
   \mathbb{I}_1 \ & \ b*\alpha^{(-1)} \\
   0 \ & \ \mathbb{I}_1
\end{pmatrix}$ $\xrightarrow{R_{21}[-b*\alpha^{(-1)}]}{}$ $\begin{pmatrix}
   \mathbb{I}_1\ &\ 0 \\
   0\ & \ \mathbb{I}_1
\end{pmatrix}$.

Based on this  process, the convolutional inverse of $A$ is given by
\begin{equation*}
    A^{(-1)}=R_{21}[-b*\alpha^{(-1)}]*D_{2}\left[\left(d-b*\alpha^{(-1)}*c\right)^{(-1)}\right]*R_{12}[-c]* D_{1}[\alpha^{(-1)}].
\end{equation*}

This procedure requires computing the convolution of the corresponding elementary matrix sequences and calculating the inverses $\alpha^{(-1)}$ and $ \left(d-b*\alpha^{(-1)}*c\right)^{(-1)}$. To accomplish this efficiently, we use FFT-based convolution techniques combined with the Newton's method for inverting real sequences. As a result, the overall computational complexity of this procedure is $\mathcal{O}\left(s^{3}\prod_{i=1}^{d}k_i \log_2(k_i)\right)$.

\section{Multi-time Markov renewal chains}\label{extension}

Consider a random system with finite state space $E =\{1,...,s\}$. We assume that the system is evolving with jumps in a d-dimensional time domain and a process $(J_n)$ records
the successively visited states. In this work we allow jumps at the same state, so renewal processes and
in particular multi-time renewal chains can be considered as a special case of multi-time Markov renewal chains for $s=1$.
The jump times are recorded by the d-dimensional process $(S_n)=(S_n^{[u]})_{1\leq u \leq d}$ with values in $\mathbb{N}^d$ which corresponds to the d-dimensional discrete time domain. We start by assuming that $S_0=0_d$, but this assumption could be relaxed if a delay is allowed.  In this study, we assume that when a jump takes place, then $S_n<S_{n+1}$ for all $n\in \mathbb{N}$ and we say that
$(S_n)$ strictly increases. Note that this does not necessarily hold true for all of its components, eventually incurring zero-time jumps for some of them.  
We also define $(X_n)=(X_n^{[u]})_{1\leq u \leq d}$ the sequence of sojourn (inter-jump) times, where 
by convention $X_0=0$ and $X_n=S_n - S_{n-1}$, for all $n\geq 1$, and the counting process of the number of jumps in the integer time grid $\prod_{i=1}^{d}[0;k_i]$ given by 
\begin{equation*}
	N(k_{1:d})	:= \sup \{n\in \mathbb{N}: S_n \leq k_{1:d}\}.
\end{equation*} 
Additionally, if we denote by $N^{[u]}$ the marginal jump counting process associated with the $u$-th time component, $1\leq u \leq d$, then it is straightforward to see that
        \begin{equation}\label{N_as_min}
            N(k_{1:d})=\min_{1\leq u\leq d}\{N^{[u]}(k_u)\},\ k_{1:d}\in \mathbb{N}^{d}.
        \end{equation}
A time multidimensional extension of the classical semi-markov kernel is defined as follows.
\begin{definition}
A matrix-valued function $q=\big(q_{ij}(k_{1:d})\big)\in \mathcal{M}_{s}(\mathbb{N}^d)$ is said to be a discrete d-dimensional \textit{multi-time semi-markov kernel} (mt-smk or dt-smk, if we want to refer to its dimension $d$ explicitly), if it satisfies the following two conditions:
\begin{itemize}
 \item[$(i)$] $q_{ij}(k_{1:d}) \geq  0,\ i,j \in E,\ k_{1:d} \in \mathbb{N}^d$, 
 \item[$(ii)$] $\displaystyle \sum_{j}\sum_{k_{1:d}} q_{ij}(k_{1:d}) =1 ,\ i \in E$. 
\end{itemize}
If additionally, the following condition holds
\begin{itemize}
 \item[$(iii)$] $q_{ij}(0_{d}) = 0, \ i,j \in E $,
\end{itemize}
then $d$-dimensional instantaneous transitions are not allowed. 
\end{definition} 

\begin{remark}
If we define the matrix-valued functions $q^{[u]}=(q^{[u]}(k_u))_{k\geq 0} \in \mathcal{M}_{s}(\mathbb{N})$,
 where 
 \begin{equation}\label{kernels}
 q^{[u]}_{ij}(k_u)=\sum_{k_{1:u-1}}\sum_{k_{u+1:d}}q_{ij}(k_{1:u-1},k_u,k_{u+1:d}),
 \end{equation}
 then one gets directly that for $1\leq u \leq d$, the matrix-valued function $q^{[u]}$ is a $1t$-smk, possibly allowing instantaneous transitions. The same holds true for any other simultaneous selection of $d_0<d$ time components, thus forming a $d_0 t$-smk.
\end{remark}
Some quantities of interest related directly to the semi-Markov kernel concern the \textit{cumulated semi-Markov kernel} $Q=\left(Q_{ij}(k_{1:d})\right)\in \mathcal{M}_{s}(\mathbb{N}^d)$:
\begin{equation}\label{cumulated_kernel}
      Q_{ij}(k_{1:d})=\mathbb{P}\left[X_{n+1}\leq k_{1:d},\,J_{n+1}=j \mid J_{n}=i\right] 
    \end{equation}
    and the \textit{survival semi-Markov kernel} $\overline{Q}=\left(\,\overline{Q}_{ij}(k_{1:d})\right)\in \mathcal{M}_{s}(\mathbb{N}^d)$:
    \begin{equation}\label{survival_kernel}
        \overline{Q}_{ij}(k_{1:d})=\mathbb{P}\left[X_{n+1}>k_{1:d},\,J_{n+1}=j \mid J_{n}=i\right].
    \end{equation}
Now, we can introduce the multi-time Markov renewal chains.
\begin{definition}\label{tb-MRC}
The chain $(J,S):=(J_{n},S_{n})_{n \in \mathbb{N}}$ is said to be a $d$-dimensional multi-time Markov-renewal chain (mt-MRC or dt-MRC, if we want to refer to its dimension $d$ explicitly) if for all $n \in \mathbb{N}$,
$i,j \in E$ and $k_{1:d} \in \mathbb{N}^d$, it satisfies a.s.
 \begin{equation}\label{eq:MRC}
    \mathbb{P}(J_{n+1}=j, S_{n+1}-S_{n}=k_{1:d} \,|\, J_{0:n},S_{0:n})
		= \mathbb{P}(J_{n+1}=j, S_{n+1}-S_{n}=k_{1:d}\,|\, J_{n}).
\end{equation}
Besides, if (\ref{eq:MRC}) is independent of $n$, then $(J,S)$ is said to be homogeneous and we set
  \begin{equation*}
    q_{ij}(k_{1:d}):=\mathbb{P}(J_{n+1}=j, S_{n+1}-S_{n}=k_{1:d}\,|\,  J_{n}=i).
\end{equation*}
The matrix-valued function $q=\big(q_{ij}(k_{1:d})\big)\in \mathcal{M}_{s}(\mathbb{N}^d)$ satisfies the properties of a dt-smk and it will be referred to as its associated semi-Markov kernel.
\end{definition}
From an mt-MRC $(J,S)$, we can deduce directly some Markov chains of interest.
\begin{proposition}
Let $(J,S)$ be a dt-MRC and $q$ its associated dt-smk. Then, the processes
$(J,S)$, $(J,X)$, $(J,S^{[u]})$ and $(J,X^{[u]})$, $u\in [1;d]$, are Markov chains with transition probabilities:
\begin{eqnarray*}
\mathbb{P}(J_{n+1}=j,\, S_{n+1}=s_{1:d} + k_{1:d} \,|\, J_{n}=i,\, S_{n}=s_{1:d}) & = &  q_{ij}(k_{1:d}), \\
\mathbb{P}(J_{n+1}=j,\, X_{n+1}= k_{1:d} \,|\, J_{n}=i,\, X_{n}=k_{1:d}^{'}) & = &  q_{ij}(k_{1:d}), \\
\mathbb{P}(J_{n+1}=j,\, S_{n+1}^{[u]}=s_{u}+k_{u} \,|\, J_{n}=i,\, S_{n}^{[u]}=s_u) & = &  q_{ij}^{[u]}(k_{u}), \\
\mathbb{P}(J_{n+1}=j,\, X_{n+1}^{[u]}=k_u \,|\, J_{n}=i,\, X_{n}^{[u]}= k_{u}^{'}) & = & q^{[u]}_{ij}(k_u), 
\end{eqnarray*}  
where $q^{[u]}_{ij}(k_u)$ is defined in (\ref{kernels}). The chain
 $(J,S^{[u]})$ is a 1t-MRC with associated smk $q^{[u]}$. 
The process $J$ is also a Markov chain, called the embedded Markov chain associated to the dt-MRC $(J,\,S)$, as well as to the 1t-MRC $(J,S^{[u]})$. Its transition probabilities are given by
\begin{equation}\label{eq:trans_prob}
p_{ij}:=\mathbb{P}(J_{n+1}=j \,|\, J_{n}=i) = \sum_{k_{1:d}} q_{ij}(k_{1:d}).
\end{equation}
\end{proposition}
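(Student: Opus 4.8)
The plan is to derive everything from the defining Markov renewal property \eqref{eq:MRC}, treating the two genuinely $d$-dimensional chains $(J,S)$ and $(J,X)$ first, and then obtaining the marginal chains and the embedded chain $J$ by a smoothing (tower-property) argument that passes from the fine natural filtration to coarser ones.

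First I would record two elementary facts. Since $X_0=0$, $S_0=0_d$ and $S_n=\sum_{i=1}^n X_i$, the increments $(X_{0:n})$ and the jump times $(S_{0:n})$ generate the same $\sigma$-algebra, so conditioning on $(J_{0:n},X_{0:n})$ is identical to conditioning on $(J_{0:n},S_{0:n})$; moreover $S_{n+1}=S_n+X_{n+1}$, so once $S_n$ is known the events $\{S_{n+1}=s_{1:d}+k_{1:d}\}$ and $\{X_{n+1}=k_{1:d}\}$ coincide. Writing $\mathcal{F}_n:=\sigma(J_{0:n},S_{0:n})$, the property \eqref{eq:MRC} says exactly that
$$\mathbb{P}(J_{n+1}=j,\,X_{n+1}=k_{1:d}\mid\mathcal{F}_n)=q_{J_n,j}(k_{1:d})\quad\text{a.s.},$$
a quantity that is a function of $J_n$ alone. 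For $(J,S)$ and $(J,X)$ the Markov property and the stated transition probabilities then follow immediately: the right-hand side depends neither on $S_n$ nor on $X_n$ (only on $J_n$), so in particular it is a function of the pair $(J_n,S_n)$, respectively $(J_n,X_n)$, and the two reductions above identify the corresponding one-step laws with $q_{ij}(k_{1:d})$.

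The marginal and embedded chains require passing to the coarser filtrations $\mathcal{G}^{[u]}_n:=\sigma(J_{0:n},S^{[u]}_{0:n})=\sigma(J_{0:n},X^{[u]}_{0:n})$ and $\mathcal{H}_n:=\sigma(J_{0:n})$, each contained in $\mathcal{F}_n$. Summing the displayed identity over the suppressed coordinates and using \eqref{kernels} gives
$$\mathbb{P}(J_{n+1}=j,\,X^{[u]}_{n+1}=k_u\mid\mathcal{F}_n)=\sum_{k_{1:u-1}}\sum_{k_{u+1:d}}q_{J_n,j}(k_{1:u-1},k_u,k_{u+1:d})=q^{[u]}_{J_n,j}(k_u),$$
and summing over all of $k_{1:d}$ gives $\mathbb{P}(J_{n+1}=j\mid\mathcal{F}_n)=\sum_{k_{1:d}}q_{J_n,j}(k_{1:d})=p_{J_n,j}$. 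Because $J_n$ is measurable with respect to every one of these $\sigma$-algebras, both right-hand sides are $\mathcal{G}^{[u]}_n$- (respectively $\mathcal{H}_n$-) measurable, so conditioning the fine identities on the coarser filtration and invoking the tower property leaves them unchanged:
$$\mathbb{P}(J_{n+1}=j,\,X^{[u]}_{n+1}=k_u\mid\mathcal{G}^{[u]}_n)=\mathbb{E}\!\left[q^{[u]}_{J_n,j}(k_u)\mid\mathcal{G}^{[u]}_n\right]=q^{[u]}_{J_n,j}(k_u),$$
and likewise $\mathbb{P}(J_{n+1}=j\mid\mathcal{H}_n)=p_{J_n,j}$. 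This is exactly the Markov property for $(J,S^{[u]})$, $(J,X^{[u]})$ and $J$, with the asserted transition probabilities; reading the first display as a one-dimensional Markov renewal identity with increment $k_u$ shows that $(J,S^{[u]})$ is a $1t$-MRC whose kernel is $q^{[u]}$, which is a genuine $1t$-smk by the Remark following \eqref{kernels}.

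The routine verifications — that passing from $S$ to $X$ is a measurable bijection on paths, and that the marginal sums define a bona fide kernel — are immediate. The only real subtlety, and the step I would treat most carefully, is the coarsening argument: one must check that the transition law computed on $\mathcal{F}_n$ is a function of $J_n$ only (not merely of $(J_n,S_n)$), since it is precisely this that makes it measurable with respect to the smaller filtrations and lets the tower property collapse the conditional expectation back to itself. Had the one-step law genuinely depended on $S_n$, the marginal process $J$ would not in general be Markov.
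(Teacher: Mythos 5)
Your proof is correct. The paper itself offers no proof of this proposition — it is stated as something one can ``deduce directly'' from the defining property \eqref{eq:MRC} — and your argument (equivalence of the filtrations generated by $(J_{0:n},S_{0:n})$ and $(J_{0:n},X_{0:n})$, the observation that the one-step conditional law is a function of $J_n$ alone, and the tower-property coarsening to $\sigma(J_{0:n},S^{[u]}_{0:n})$ and $\sigma(J_{0:n})$, which is indeed the one step that genuinely needs care) is precisely the standard argument the authors implicitly intend, supplying the details they omit.
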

Next, in order to study the probabilistic framework of an mt-MRC, we need to define two types of distributions, the sojourn time distributions in a given state and the corresponding conditional distributions given the next visited state. 
\begin{definition} \label{mt-MRC} 
Let $(J,S)$ be a $dt$--MRC. For any $i \in E$, $k_{1:d}\in \mathbb{N}^d$ and $k_{u}\in \mathbb{N}, \ u\in [1;d]$, we introduce the following functions related to the sojourn time distribution of $[X_{n+1}|J_n=i], \  n \in \mathbb{N}$:
\begin{itemize}
    \item[$(i)$] the joint probability mass function $h_i\in \mathcal{M}_1(\mathbb{N}^d)$:
    \begin{equation*}
        h_{i}(k_{1:d}):=\mathbb{P}(X_{n+1}=k_{1:d}\mid J_{n}=i)\quad \textrm{and} \quad h_{\!E}:=(h_{i})_{i\in E}\in \mathcal{M}_{s\times 1}(\mathbb{N}^d),
    \end{equation*}
    \item [$(ii)$] the joint (cumulative) distribution function $H_{i}\in \mathcal{M}_1(\mathbb{N}^d)$:
     \begin{equation*}
        H_{i}(k_{1:d}):=\mathbb{P}(X_{n+1}\leq k_{1:d}\mid J_{n}=i)\quad \textrm{and} \quad H_{\!E}:=(H_{i})_{i\in E}\in \mathcal{M}_{s\times 1}(\mathbb{N}^d),
     \end{equation*}
    \item[$(iii)$]  the joint complementary distribution function 
    $\widetilde{H}_{i}\in \mathcal{M}_1(\mathbb{N}^d)$:
    \begin{equation*}
        \widetilde{H}_{i}(k_{1:d}):=
       1- \mathbb{P}(X_{n+1}\leq k_{1:d}\mid J_{n}=i)\quad \textrm{and} \quad \widetilde{H}_{\!E} :=(\widetilde{H}_{i})_{i\in E}\in \mathcal{M}_{s\times 1}(\mathbb{N}^d),
    \end{equation*}
    and the associated matrix sequence $
      \widetilde{H}:= \mathrm{dg}\left( \widetilde{H}_{\!E}\right)\in \mathcal{M}_{s}(\mathbb{N}^d)$,
	\item [$(iv)$] the  joint survival function $\overline{H}_{i}\in \mathcal{M}_1(\mathbb{N}^d)$: 
	\begin{equation*}
	    \overline{H}_{i}(k_{1:d}):=\mathbb{P}\left(X_{n+1}>k_{1:d}\mid J_{n}=i\right)\quad \textrm{and} \quad \overline{H}_{\!E} :=(\overline{H}_{i})_{i\in E}\in \mathcal{M}_{s\times 1}(\mathbb{N}^d),
	\end{equation*}
     \item[$(v)$] the marginal probability mass function $h^{[u]}_{i}\in \mathcal{M}_1(\mathbb{N})$:
    \begin{equation*}
        h^{[u]}_{i}(k_u)=\mathbb{P}(X^{[u]}_{n+1}=k_{u}\mid J_{n}=i)\quad \textrm{and} \quad h^{[u]}_{\!E} :=(h^{[u]}_i)_{i\in E}\in \mathcal{M}_{s\times 1}(\mathbb{N}),
    \end{equation*}
    \item [$(vi)$] the marginal (cumulative) distribution function $H^{[u]}_{i}\in \mathcal{M}_1(\mathbb{N})$:
    \begin{equation*}
        H^{[u]}_{i}(k_u)=\mathbb{P}(X^{[u]}_{n+1}\leq k_{u}\mid J_{n}=i)\quad \textrm{and} \quad H^{[u]}_{\!E} :=(H_{i}^{[u]})_{i\in E}\in \mathcal{M}_{s\times 1}(\mathbb{N}),
    \end{equation*}
	\item [$(vii)$] the marginal survival function $\overline{H}^{[u]}_{i}=\widetilde{H}^{[u]}_{i}\in \mathcal{M}_1(\mathbb{N})$:
	\begin{equation*}
	    \overline{H}^{[u]}_{i}(k_u)= \mathbb{P}(X^{[u]}_{n+1}> k_{u}\mid J_{n}=i)\quad \textrm{and} \quad \overline{H}^{[u]}_{\!E} :=(\overline{H}_{i}^{[u]})_{i\in E}\in \mathcal{M}_{s\times 1}(\mathbb{N}).
	\end{equation*}
\end{itemize}
\end{definition}
\begin{remark}
 The sojourn time distribution function in a state $i\in E$ can also be formulated via the cumulated semi-Markov kernel $Q$ given by (\ref{cumulated_kernel}):
 \begin{equation*}
    H_{i}(k_{1:d})=\sum_{j \in E} Q_{ij}(k_{1:d}), \qquad k_{1:d}\in \mathbb{N}^d.
 \end{equation*}
\end{remark}
Now, we introduce important functions for the conditional distributions given the next visited state. 
\begin{definition} \label{mar-MRC} Let $(J,S)$ be a $dt$--MRC and $(J,S^{[u]})$ the associated marginal $1t$--MRCs, $u\in [1;d]$. For any $i,j \in E$, $k_{1:d}\in \mathbb{N}^d$ and $k_{u}\in \mathbb{N}, \ u\in [1;d]$, we introduce the following functions related to the conditional sojourn time distribution of $[X_{n+1}|J_n=i,J_{n+1}=j], \  n \in \mathbb{N}$:
\begin{itemize}
    \item[$(i)$] the conditional joint probability mass function $f_{ij}\in \mathcal{M}_1(\mathbb{N}^d)$:
    \begin{equation*}
        f_{ij}(k_{1:d}):=\mathbb{P}(X_{n+1}=k_{1:d}\mid J_{n}=i,J_{n+1}=j)\quad \textrm{and} \quad f:=(f_{ij})_{i,j\in E}\in \mathcal{M}_s(\mathbb{N}^d),
    \end{equation*}
    \item[$(ii)$]the conditional joint (cumulative) distribution function $F_{ij}\in \mathcal{M}_1(\mathbb{N}^d)$:
   \begin{equation*}
        F_{ij}(k_{1:d}):=\mathbb{P}(X_{n+1}\leq k_{1:d}\mid J_{n}=i,J_{n+1}=j)\quad \textrm{and} \quad F:=(F_{ij})_{i,j\in E}\in \mathcal{M}_s(\mathbb{N}^d),
    \end{equation*}
    \item[$(iii)$] the conditional marginal probability mass function $f^{[u]}_{ij}\in \mathcal{M}_1(\mathbb{N})$:
    \begin{equation*}
        f_{ij}^{[u]}(k_{u}):=\mathbb{P}(X^{[u]}_{n+1}=k_{u}\mid J_{n}=i,J_{n+1}=j)\quad \textrm{and} \quad
        f^{[u]}:=(f_{ij}^{[u]})_{i,j\in E}\in \mathcal{M}_s(\mathbb{N}),
    \end{equation*}
    \item[$(iv)$]the conditional marginal distribution function $F^{[u]}_{ij}\in \mathcal{M}_1(\mathbb{N})$:
    \begin{equation*}
        F_{ij}^{[u]}(k_{u}):=\mathbb{P}(X_{n+1}\leq k_{u}\mid J_{n}=i,J_{n+1}=j)\quad \textrm{and} \quad F^{[u]}:=(F_{ij}^{[u]})_{i,j\in E}\in \mathcal{M}_s(\mathbb{N}).
    \end{equation*}
\end{itemize}
\end{definition}
\begin{remark} \label{back-up}
The following relations hold for the quantities related to
\begin{itemize}
\item[(i)] the semi-Markov kernel:
\begin{eqnarray*}
    Q_{ij}=\mathbbm{1}*q_{ij}  \quad \text{and} \quad Q=\mathrm{dg}(\mathbbm{1}_s)*q,
    \end{eqnarray*}
\item[(ii)] the sojourn times distribution: 
\begin{eqnarray*}
     H_{i}=\mathbbm{1}*h_i
    \quad \textrm{and} \quad  H_{\!E}= \mathrm{dg}(\mathbbm{1}_s)*h_{\!E},
    \end{eqnarray*}
\item[$(iii)$] the conditional sojourn times distribution: 
    \begin{equation*}
        F_{ij}=\mathbbm{1}*f_{ij}\quad \textrm{and} \quad F=
        \mathrm{dg}(\mathbbm{1}_s)*f,
    \end{equation*}

    \item[$(iv)$]the marginal conditional distribution function of $X^{[u]}_{n+1}$, $n \in \mathbb{N}$, given $(J_n,J_{n+1})=(i,j)$:
    \begin{equation*}       F^{[u]}_{ij}=\mathbf{1}*f^{[u]}_{ij}\quad \textrm{and} \quad F^{[u]}:=(F_{ij}^{[u]})_{i,j\in E}=
        \mathrm{dg}(\mathbf{1}_s)*f^{[u]},
    \end{equation*}
    where $\mathbf{1}$ is the unitary function
    $\mathbbm{1}$ with $d=1$ and $\mathbf{1}_s$ the $s$-dimensional column vector of $\mathbf{1}$'s.
%
    %
\end{itemize}
\end{remark}
Some moments of interest related to the sojourn time distribution in a specific state of an mt-MRC and the associated marginal MRCs are given in the following definition.
\begin{definition}\label{moments}
 For a dt-MRC $(J,S)$ and the marginal MRCs $\{(J,S^{[u]})\}_{u=1}^{d}$, we introduce:
\begin{itemize}
\item[$(i)$] the $k$-th moment of the $u$-th sojourn time in state $i\in E$, $k\in \mathbb{N}^{*}$, $1\leq u \leq d$:
    \begin{equation*}   m^{[u,k]}_i:=\mathbb{E}\left[\left(X^{[u]}_{n+1}\right)^{k} \ \ \big|\ \  J_n=i\right],
    \end{equation*}
  while for the  first moments we reserve the symbol $m_{i}^{[u]}$, i.e., $m_{i}^{[u]}:=m_{i}^{[u,1]}$, 
    \item[$(ii)$] the mean sojourn time in state $i\in E$:
    \begin{equation*}
        m_i:=\left(m_{i}^{[u]}\right)_{i\in E}=\mathbb{E}\left[X_{n+1}\mid J_n=i\right],
    \end{equation*}
    \item[$(iii)$] the  mean sojourn time given the current and next state $(i,j)\in E^2$:
    \begin{equation*}
        m_{ij}:=\mathbb{E}\left[X_{n+1}\mid J_n=i,\,J_{n+1}=j\right],
    \end{equation*}
      \item[$(iv)$] the mean of the product $X^{[u]}_{n+1}\cdot X^{[v]}_{n+1}$ in  state $i\in E$, $1\leq u,v \leq d$:
    \begin{equation*}      m_i^{[u, v]}:=\mathbb{E}\left[X^{[u]}_{n+1}\cdot X^{[v]}_{n+1} \ \ \big|\ \  J_n=i\right],
    \end{equation*}
     \item[$(v)$] the conditional  mean of the product $X^{[u]}_{n+1}\cdot X^{[v]}_{n+1}$ given the current and next state $(i,j)\in E^2$, $1\leq u,v \leq d$:
    \begin{equation*}       
m_{ij}^{[u,v]}:=\mathbb{E}\left[X^{[u]}_{n+1}\cdot X^{[v]}_{n+1}\ \ \big|\ \ J_n=i,\, J_{n+1}=j\right],
    \end{equation*}
    \item [$(vi)$]the  covariance between $X_{n+1}^{[u]}$ and $X_{n+1}^{[v]}$ in state $i\in E$, $1\leq u,v \leq d$:
    \begin{equation*}
         c_{i}^{[u,v]}=m_i^{[u,v]}-m_i^{[u]} \cdot m_i^{[v]} ,
    \end{equation*}
    \item [$(vii)$] the conditional covariance between $X_{n+1}^{[u]}$ and $X_{n+1}^{[v]}$ given the current and next state $(i,j)\in E^2$, $1\leq u,v\leq d$ :
    \begin{equation*}
          c_{ij}^{[u,v]}= m_{ij}^{[u,v]}-m_{ij}^{[u]} \cdot m_{ij}^{[v]}.
    \end{equation*}\end{itemize}
\end{definition}

\begin{remark}\label{mean-sojourn}
 By \eqref{eq:trans_prob} and Definition \ref{moments}(ii)-(v), we get directly that
 \begin{eqnarray*}
     m_{i} &=&\sum_{j}p_{ij}m_{ij}, \\
     m_{i}^{[u,v]} &=&\sum_{j}p_{ij}m_{ij}^{[u,v]}.
 \end{eqnarray*}
\end{remark}
In order to compute the conditional covariance between two interjump times we need to compute the term $m_{i}^{[u,v]}=\mathbb{E}\left[X^{[u]}_{n+1}\cdot X^{[v]}_{n+1}\,|\,J_n=i\right]$. Since $X^{[u]}_{n+1}\cdot X^{[v]}_{n+1}$ is a nonnegative integer valued random variable, notice that
\begin{equation}\label{wanted}
   m_{i}^{[u,v]}=\sum_{k=0}^{\infty}\mathbb{P}\left[X^{[u]}_{n+1}\cdot X^{[v]}_{n+1}>k\,|\,J_n=i\right],
\end{equation}
%
so the summand needs to be computed.
In the next proposition we give a more general result of independent interest.
%
  \begin{proposition}
        Let $(J,S)$ be a dt-MRC, $\phi:\mathbb{N}^{d}\to \mathbb{N}$  be a function and $\Phi_n:=\phi(X_n)$. If $S^{[\phi]}_n=\sum_{k=0}^{n}\Phi_k$, $n\geq 0$, then the chain $(J,S^{[\phi]})$  forms a Markov renewal chain with interjump times $(\Phi_{n})_{n\geq 1}$ and the associated semi-Markov kernel $q^{[\phi]}$ is given by:
        \begin{equation}\label{Phi-kernel}
            q^{[\phi]}_{ij}(k)=\sum_{\substack{v: \\ \phi(v)=k}}q_{ij}(v),\, i,\, j \in E,\, k\in \mathbb{N}.
        \end{equation}
  \end{proposition}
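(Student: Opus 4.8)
The plan is to verify the defining Markov-renewal identity of Definition~\ref{tb-MRC} for the pair $(J,S^{[\phi]})$ and to read off its kernel in the process. Since $S^{[\phi]}_{n+1}-S^{[\phi]}_n=\Phi_{n+1}=\phi(X_{n+1})$ with $X_{n+1}=S_{n+1}-S_n$, the interjump times of $(J,S^{[\phi]})$ are automatically the $\Phi_n$, so the whole content is the conditional-independence statement together with the explicit form \eqref{Phi-kernel}. The only genuine subtlety is that the natural history of $(J,S^{[\phi]})$ is coarser than that of $(J,S)$ when $\phi$ is not injective, so the dt-MRC property cannot be applied directly; I would circumvent this by first conditioning on the finer filtration and then projecting.

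Concretely, I would set $\mathcal{F}_n:=\sigma(J_{0:n},S_{0:n})$ and $\mathcal{G}_n:=\sigma(J_{0:n},S^{[\phi]}_{0:n})$, and observe that $\mathcal{G}_n\subseteq\mathcal{F}_n$ because each $S^{[\phi]}_m=\sum_{k=0}^m\phi(X_k)$ is a deterministic function of $S_{0:m}$. Writing $\{\Phi_{n+1}=k\}$ as the disjoint union $\bigcup_{v:\phi(v)=k}\{X_{n+1}=v\}$ and using additivity together with the dt-MRC property of Definition~\ref{tb-MRC}, the first key step gives
\[
\mathbb{P}(J_{n+1}=j,\Phi_{n+1}=k\mid\mathcal{F}_n)=\sum_{v:\phi(v)=k}\mathbb{P}(J_{n+1}=j,X_{n+1}=v\mid\mathcal{F}_n)=\sum_{v:\phi(v)=k}q_{J_n,j}(v),
\]
which is exactly $q^{[\phi]}_{J_n,j}(k)$. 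The point I would stress is that this expression depends on the past only through $J_n$, hence is $\sigma(J_n)$-measurable and in particular $\mathcal{G}_n$-measurable.

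The second step is the projection: applying the tower property along $\mathcal{G}_n\subseteq\mathcal{F}_n$ and using the measurability just noted,
\[
\mathbb{P}(J_{n+1}=j,\Phi_{n+1}=k\mid\mathcal{G}_n)=\mathbb{E}\big[q^{[\phi]}_{J_n,j}(k)\mid\mathcal{G}_n\big]=q^{[\phi]}_{J_n,j}(k),
\]
and conditioning further down onto $\sigma(J_n)$ identifies the right-hand side with $\mathbb{P}(J_{n+1}=j,\Phi_{n+1}=k\mid J_n)$. This is precisely the Markov-renewal property for $(J,S^{[\phi]})$ and simultaneously exhibits $q^{[\phi]}$ as its kernel. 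I would finish by confirming that $q^{[\phi]}$ is a legitimate one-dimensional smk: nonnegativity is inherited from $q$, and the normalization $\sum_j\sum_{k\in\mathbb{N}}q^{[\phi]}_{ij}(k)=\sum_j\sum_{v\in\mathbb{N}^d}q_{ij}(v)=1$ holds because summing over $k$ and then over each fiber $\phi^{-1}(k)$ reorganizes the sum over all of $\mathbb{N}^d$ exactly once.

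The hard part is conceptual rather than computational: one must resist applying the Markov-renewal property directly on the coarser filtration $\mathcal{G}_n$, and instead pass through $\mathcal{F}_n$, exploiting that the conditional probability obtained there is already a function of $J_n$ alone. Once that is recognized, the remaining verifications are pure bookkeeping.
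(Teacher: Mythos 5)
Your proof is correct, and it reaches the kernel formula by a route that differs from the paper's in one meaningful respect. The paper conditions on the event $\{J_{0:n}=i_{0:n},\,S^{[\phi]}_{0:n}=u_{0:n}\}$ directly, and therefore must first rewrite it as the union $A_n$ of cylinder events $\{X_1=s_1,\dots,X_n=s_n\}$ over all $s_{1:n}$ with $\phi(s_i)=u_i-u_{i-1}$; it then applies the Markov renewal property of $(J,S)$ termwise, implicitly using the fact that a conditional probability which is constant across the disjoint pieces of a union is unchanged by conditioning on the union. You avoid decomposing the past altogether: you condition on the finer filtration $\mathcal{F}_n=\sigma(J_{0:n},S_{0:n})$ first, where the dt-MRC property applies verbatim, note that the resulting expression $q^{[\phi]}_{J_n,j}(k)$ is $\sigma(J_n)$-measurable and hence $\mathcal{G}_n$-measurable, and let the tower property along $\mathcal{G}_n\subseteq\mathcal{F}_n$ perform the projection in one line. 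The fiber decomposition of $\{\Phi_{n+1}=k\}$ into the countable disjoint union $\bigcup_{v:\phi(v)=k}\{X_{n+1}=v\}$ and countable additivity of conditional probability are common to both arguments. What your route buys is rigor precisely at the point where the paper is informal (the passage from cylinder events to their union $A_n$), at the cost of being less explicit than the paper's elementary event-by-event computation; your closing verification that $q^{[\phi]}$ is a bona fide one-dimensional semi-Markov kernel (nonnegativity, and normalization by reorganizing the sum over the fibers $\phi^{-1}(k)$) is a small worthwhile addition that the paper omits.
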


\begin{proof}
First, we get the following relations:
\begin{eqnarray*}
 \left\{S^{[\phi]}_{0}=u_{0},S^{[\phi]}_{1}=u_1,\ldots,S^{[\phi]}_n=u_n\right\}&=&\left\{\Phi_{0}=u_{0},\Phi_{1}=u_1-u_0,\ldots,\Phi_{n}=u_n-u_{n-1}\right\}\\
 &=&   \bigcup_{\substack{s_1,\ldots,s_n: \\ \phi(s_i)=u_i-u_{i-1},1\leq i \leq n\,}} \left\{X_{1}=s_1,\ldots,X_n=s_n\right\}=:A_{n},\\
 \left\{\Phi_{n+1}=k\right\}&=&\bigcup_{\substack{v: \\ \phi(v)=k\,}} \left\{X_{n+1}=v \right\}.
\end{eqnarray*}
Consequently,
    \begin{eqnarray*} \mathbb{P}\left[J_{n+1}=j,\,S^{[\phi]}_{n+1}-S^{[\phi]}_{n}=k\mid J_{0:n},\, S^{[\phi]}_{0:n}=u_{0:n}\right]  &=&      \mathbb{P}\left[J_{n+1}=j,\,\Phi_{n+1}=k\mid J_{0:n},\, S^{[\phi]}_{0:n}=u_{0:n}\right]\\&=&   \sum_{\substack{\substack{v: \\ \phi(v)=k\,}}}\mathbb{P}\left[J_{n+1}=j,\,X_{n+1}=v \mid J_{0:n},\,A_n\right].
    \end{eqnarray*}
    Using the Markov renewal property of the sequence $(J,\,S)$, we conclude that the events \\
    $\left\{J_{n+1}=j,\,X_{n+1}=v \right\}$ and $\{J_{0:n}=i_{0:n},\,A_n\}$  are conditionally independent given the state of $J_n$.
    
    Therefore,
    \begin{equation*}
        \mathbb{P}\left[J_{n+1}=j,\,X_{n+1}=v \mid J_{0:n}=i_{0:n},\,A_n\right]=\mathbb{P}\left[J_{n+1}=j,\,S_{n+1}-S_{n}=v \mid J_{n}=i_n\right]=q_{i_{n}j}(v).
    \end{equation*}
    The corresponding semi-Markov kernel $q^{[\phi]}$ is determined by \eqref{Phi-kernel}.
\end{proof}

The following corollary is a direct application of the previous proposition.
\begin{corollary}
    If $(J,\,S)$ is a dt-MRC, then the process $(J,\, S^{[y]})$ with interjump times $Y_{n+1}=X^{[u]}_{n+1}\cdot X^{[v]}_{n+1} $,\, $1\leq u < v\leq d$, forms a Markov renewal chain and its semi-Markov kernel $q^{[y]}$ can be determined by:
    \begin{equation}\label{mult-kernel}
      q^{[y]}_{ij}(k)=\sum_{\substack{\substack{k_{1:d}} \\  k_{u}\cdot k_{v}=k\,}} q_{ij}(k_{1:d}).    
    \end{equation}
\end{corollary}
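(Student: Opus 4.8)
The plan is to derive the corollary as a direct specialization of the preceding Proposition, with the aggregating function taken to be the coordinate product. Concretely, I would fix the two indices $u,v$ with $1\le u<v\le d$ and define $\phi:\mathbb{N}^{d}\to\mathbb{N}$ by $\phi(k_{1:d}):=k_{u}\cdot k_{v}$. Since the product of two nonnegative integers is again a nonnegative integer, $\phi$ is a genuine $\mathbb{N}$-valued function on $\mathbb{N}^{d}$, so the hypotheses of the Proposition are satisfied verbatim for this choice.

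With this $\phi$, the induced aggregated increments satisfy $\Phi_{n+1}=\phi(X_{n+1})=X^{[u]}_{n+1}\cdot X^{[v]}_{n+1}=Y_{n+1}$, and hence $S^{[\phi]}_{n}=\sum_{k=0}^{n}\Phi_{k}$ coincides with $S^{[y]}_{n}$. Applying the Proposition directly then gives that $(J,S^{[y]})=(J,S^{[\phi]})$ is a Markov renewal chain with interjump times $(Y_{n})_{n\ge 1}$, together with the kernel representation $q^{[\phi]}_{ij}(k)=\sum_{v:\,\phi(v)=k}q_{ij}(v)$.

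It then only remains to rewrite this kernel in the asserted form. Substituting $\phi(v)=k_{u}\cdot k_{v}$ and writing $v=k_{1:d}$, the index condition $\phi(v)=k$ becomes $k_{u}\cdot k_{v}=k$, so that
\[
q^{[y]}_{ij}(k)=\sum_{\substack{k_{1:d}\\ k_{u}\cdot k_{v}=k}}q_{ij}(k_{1:d}),
\]
which is precisely (\ref{mult-kernel}). There is no genuine obstacle here, as the statement is by construction a special case of the Proposition; the only point worth flagging is that $Y_{n+1}$ can vanish when one of the coordinates performs a zero-time jump, so that $(J,S^{[y]})$ may exhibit instantaneous transitions. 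This is harmless, since a one-dimensional Markov renewal chain is permitted to have $q^{[y]}_{ij}(0)>0$, and the Proposition imposes no strict-increase requirement on $S^{[\phi]}$.
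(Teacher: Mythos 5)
Your proposal is correct and follows exactly the paper's intended route: the paper gives no separate proof, stating only that the corollary ``is a direct application of the previous proposition,'' which is precisely your specialization $\phi(k_{1:d})=k_u\cdot k_v$. Your added remark about possible instantaneous transitions when a coordinate makes a zero-time jump is accurate and consistent with the paper's convention that one-dimensional kernels derived from a dt-MRC may allow $q^{[y]}_{ij}(0)>0$.
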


\begin{remark} For the MRC $(J,S^{[y]})$ the marginal survival function $\overline{H}^{[y]}_{i}(\cdot)$ of state i, is given by
\begin{eqnarray*}
    \overline{H}^{[y]}_{i}(k)&=& \sum_{j}\sum_{l=k+1}^{\infty}\mathbb{P}\left[J_{n+1}=j,\,Y_{n+1}=l \mid J_{n}=i\right] = \sum_{j}\sum_{l=k+1}^{\infty}q^{[y]}_{ij}(l)=\sum_{j} \overline{Q_{ij}^{[y]}}(k).
\end{eqnarray*}
    Thus, for the conditional covariance between $X^{[u]}_{n+1}$ and $X^{[v]}_{n+1}$ we have
    \begin{equation} \label{cond-cov}
        c_{i}^{[u,v]}=\sum_{k=0}^{\infty}\sum_{j}\overline{Q_{ij}^{[y]}}(k)-\left(\sum_{k=0}^{\infty}\sum_{j}\overline{Q_{ij}^{[u]}}(k)\right)\cdot \left(\sum_{k=0}^{\infty}\overline{Q_{ij}^{[v]}}(k)\right).
    \end{equation}
\end{remark}
In the rest of this section we present the extension of the time unidimensional semi-Markov chains to multiple time dimensions and study some of its probabilistic characteristics. The latter is facilitated by a natural extension of the convolution-based solutions of Markov renewal equations from the unidimensional time to the multidimensional case. 

In the development of the theory a fundamental role will be played by the sequence of matrices $\mathbbm{I}_s -q$ and in the following proposition we give the form of its convolutional inverse.
\begin{proposition}
The convolutional inverse of the matrix-valued function $\mathbbm{I}_s\!-\!q$ exists and is given by
\begin{equation}\label{inv-E0-q}
    u:=(\mathbbm{I}_s-q)^{(-1)}=\sum_{n\geq 0}^{}q^{(n)}.
\end{equation}
\end{proposition}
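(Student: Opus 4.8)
The plan is to reduce the statement to a direct application of Theorem \ref{algebraicp1}. The decisive preliminary observation is that the associated kernel satisfies $q(0_d)=O_s$. Indeed, by the standing convention in Definition \ref{tb-MRC} the jump process $(S_n)$ strictly increases, so that $X_{n+1}=S_{n+1}-S_n\neq 0_d$ almost surely; consequently $q_{ij}(0_d)=\mathbb{P}(J_{n+1}=j,\,X_{n+1}=0_d\mid J_n=i)=0$ for all $i,j\in E$. Hence $(\mathbbm{I}_s-q)(0_d)=I_s-q(0_d)=I_s$, which is non-singular. By Theorem \ref{algebraicp1}, a matrix-valued function whose value at $0_d$ is non-singular is convolutionally invertible, so $(\mathbbm{I}_s-q)^{(-1)}$ exists. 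Moreover, since $q(0_d)=O_s$, Lemma \ref{n<k1} together with Remark \ref{Rem-0e} guarantees that the series $\sum_{n\geq 0}q^{(n)}$ is well defined, each term $\sum_{n}q^{(n)}(k_{1:d})$ being a finite sum.

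To identify the inverse explicitly, I would specialize formula (\ref{form-matrix-inv-unit}) of Theorem \ref{algebraicp1} to $A:=\mathbbm{I}_s-q$. Here $[A(0_d)]^{-1}=I_s$, so by (\ref{A0}) one has $A_0(k_{1:d})=A(k_{1:d})[A(0_d)]^{-1}=A(k_{1:d})$, and therefore $\mathbbm{I}_s-A_0=\mathbbm{I}_s-(\mathbbm{I}_s-q)=q$. Substituting into (\ref{form-matrix-inv-unit}) yields $(\mathbbm{I}_s-q)^{(-1)}=\sum_{n\geq 0}q^{(n)}$, which is exactly (\ref{inv-E0-q}). Equivalently, and fully self-containedly, one may verify this by the telescoping computation (\ref{st-Al-inv}) used inside the proof of Theorem \ref{algebraicp1}: since $(\mathbbm{I}_s-q)(0_d)=I_s$ and $\mathbbm{I}_s-(\mathbbm{I}_s-q)=q$ has value $O_s$ at $0_d$, the identical argument gives $\left(\sum_{n\geq 0}q^{(n)}\right)*(\mathbbm{I}_s-q)=\mathbbm{I}_s$; uniqueness of the two-sided inverse in the monoid $(\mathcal{M}_s(\mathbb{N}^d),*)$ then forces $\sum_{n\geq 0}q^{(n)}$ to coincide with $(\mathbbm{I}_s-q)^{(-1)}$.

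There is no substantial obstacle here: once $q(0_d)=O_s$ is established, the result is a routine specialization of the general inversion formula. The only point requiring genuine care is the justification of $q(0_d)=O_s$ from the strict-increase convention for $(S_n)$, since it is precisely this condition that makes $A(0_d)=I_s$ and thereby collapses the general expression of Theorem \ref{algebraicp1} into the clean geometric-series form $\sum_{n\geq 0}q^{(n)}$.
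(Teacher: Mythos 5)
Your proposal is correct and follows essentially the same route as the paper: both establish that $[\mathbbm{I}_s-q](0_d)=I_s$ is non-singular and then specialize formula (\ref{form-matrix-inv-unit}) of Theorem \ref{algebraicp1} to $A=\mathbbm{I}_s-q$, so that $\mathbbm{I}_s-A_0=q$ collapses the general expression to $\sum_{n\geq 0}q^{(n)}$. The only difference is cosmetic: the paper simply takes $q(0_d)=O_s$ as assumed, whereas you derive it from the strict-increase convention on $(S_n)$ and add a redundant telescoping re-verification.
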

\begin{proof}Since $q(0_d)$ is assumed to be the null matrix then $[\mathbbm{I}_s-q](0_d)=I_s$  and consequently the convolutional inverse of the sequence of matrices $\mathbbm{I}_s-q$ exists by Theorem \ref{algebraicp1}. Furthermore,
by (\ref{form-matrix-inv-unit}) we get directly that
\begin{equation*}
    (\mathbbm{I}_s-q)^{(-1)}=\sum_{n\geq 0}^{}\left(\mathbbm{I}_s-(\mathbbm{I}_s-q)\right)^{(n)}=\sum_{n\geq 0}^{}q^{(n)}.
\end{equation*}
\end{proof}
Relation (\ref{inv-E0-q}) can be rewritten trivially: $u=\mathbbm{I}_s + q*u$. This representation leads us to  a special case of a  multivariate extension of the well-known class of discrete time Markov renewal equations.
\begin{definition}[Discrete multi-time Markov renewal equation] 
Let $L \in $ $\mathcal{M}_{s}(\mathbb{N}^{d})$ be an unknown matrix-valued function and $G \in \mathcal{M}_{s}(\mathbb{N}^{d})$ be a known one. The equation 
\begin{equation}\label{mt-mre}
    L=G + q*L
\end{equation}
is called the discrete multi-time Markov renewal equation (MRE).
\end{definition}
 By (\ref{inv-E0-q}) we get directly the solution to the above equation. 
\begin{corollary}
The multi-time Markov renewal equation (\ref{mt-mre}) has a unique solution given by
\begin{equation*}
    L=u*G.
\end{equation*}
\end{corollary}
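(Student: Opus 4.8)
The plan is to recognize \eqref{mt-mre} as a linear equation in the unital associative $\mathbb{R}$-algebra $(\mathcal{M}_s(\mathbb{N}^{d}),+,*)$ established in Theorem \ref{algebraicp1}, and to solve it simply by convolving with the inverse $u$. First I would rewrite the equation: since $\mathbbm{I}_s$ is the convolutional identity and $*$ is bilinear, $L=G+q*L$ is equivalent to $(\mathbbm{I}_s-q)*L=G$. The preceding proposition guarantees that $\mathbbm{I}_s-q$ is convolutionally invertible with inverse $u=\sum_{n\geq 0}q^{(n)}$, because the assumption $q(0_d)=O_s$ yields $[\mathbbm{I}_s-q](0_d)=I_s$, which is non-singular; moreover, by Theorem \ref{algebraicp1} the left and right inverses coincide, so $u*(\mathbbm{I}_s-q)=(\mathbbm{I}_s-q)*u=\mathbbm{I}_s$.

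For uniqueness, I would convolve $(\mathbbm{I}_s-q)*L=G$ on the left by $u$ and invoke associativity:
\begin{equation*}
u*\big((\mathbbm{I}_s-q)*L\big)=\big(u*(\mathbbm{I}_s-q)\big)*L=\mathbbm{I}_s*L=L,
\end{equation*}
while the right-hand side becomes $u*G$; hence any solution must satisfy $L=u*G$. For existence, I would verify directly that $L=u*G$ solves \eqref{mt-mre}, using the relation $u=\mathbbm{I}_s+q*u$ recorded just before the definition of the MRE, which gives $q*u=u-\mathbbm{I}_s$. Then
\begin{equation*}
G+q*L=G+q*(u*G)=G+(q*u)*G=G+(u-\mathbbm{I}_s)*G=u*G=L,
\end{equation*}
as required.

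I expect there to be no genuine obstacle here: the result is essentially pure ring algebra, an instance of solving $a\cdot x=b$ by $x=a^{-1}b$ once $a$ is known to be a unit. The only point demanding care is non-commutativity (Theorem \ref{algebraicp1} notes the algebra is commutative only when $s=1$), so I must consistently use \emph{left} convolution by $u$ and rely on the fact that $u$ is a genuine two-sided inverse, which is exactly what the cited proposition and Theorem \ref{algebraicp1} provide.
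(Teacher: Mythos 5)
Your proof is correct and follows essentially the same route as the paper, which obtains the solution ``directly'' from the invertibility of $\mathbbm{I}_s-q$ established in the preceding proposition; you have simply made explicit the two standard steps (uniqueness by left-convolving with the two-sided inverse $u$, existence by substituting $L=u*G$ and using $u=\mathbbm{I}_s+q*u$). Your attention to non-commutativity and to the fact that $u$ is a two-sided inverse is exactly the care the paper's Theorem \ref{algebraicp1} is meant to supply.
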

The above solution will facilitate the development of the multi-time semi-Markov chain theory similarly to the unidimensional case. Some of its characteristics are defined below.
\begin{definition}\label{tb-SMC}
Let $(J,S)$ be a $d$-dimensional mt-MRC.
The chain $Z=(Z_{k_{1:d}})_{k_{1:d}\in \mathbb{N}^{d}}$ with
 \begin{equation*}
    Z_{k_{1:d}} = J_{N(k_{1:d})},
		\end{equation*} 
is said to be the $d$-dimensional multi-time semi-Markov chain (dt-SMC) associated to $(J,S)$.
\end{definition}
\begin{definition}
The transition function of the dt-SMC $Z$ is denoted by $P$ $\in\mathcal{M}_{s}(\mathbb{N}^{d})$, where
\begin{equation*}
    P_{ij}(k_{1:d}):= \mathbb{P}(Z_{k_{1:d}}=j \mid Z_{0_d}= i ) , \ i,j \in E,\ k_{1:d} \in \mathbb{N}^{d}.
\end{equation*}
\end{definition}
\begin{remark}
For $d=1$, the mt-SMC $Z_{k_{1:d}}$ corresponds to a usual semi-Markov chain.
\end{remark}
\begin{remark} In the theory of the classical semi-Markov processes, any sojourn time is strictly positive (with probability 1). However, in this context zero-time events are allowed marginally and consequently the   marginal process $(Z^{[u]}_{k_u}=J_{N^{[u]}(k_u)})$ isn't necessarily a usual semi-Markov chain.
\end{remark}

Similarly to the time unidimensional case, the transition function of the multi-time semi-Markov chain satisfies a multi-time Markov renewal equation which can be solved easily.
\begin{proposition} The transition function $P\in \mathcal{M}_{s}(\mathbb{N}^{d})$ can be computed as follows:
  \begin{equation} \label{SMC-renewal}
    P= u*\widetilde{H} , 
\end{equation}
where $\widetilde{H}$ is given in Definition
\ref{mt-MRC}-(iii).
\end{proposition}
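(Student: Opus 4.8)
The plan is to show that the transition function $P$ satisfies the discrete multi-time Markov renewal equation \eqref{mt-mre} with forcing term $G=\widetilde{H}$, and then to conclude by the uniqueness corollary to \eqref{mt-mre} that $P=u*\widetilde{H}$. The derivation rests on a first-jump (regeneration) argument, exactly as in the unidimensional case, but adapted to the partial order on $\mathbb{N}^d$.

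First I would record the structure of the counting process. Since $(S_n)$ strictly increases with $S_0=0_d$, we have $S_1\leq S_n$ for every $n\geq 1$ in the partial order, so $\{N(k_{1:d})=0\}$ coincides with $\{S_1\not\leq k_{1:d}\}=\{X_1\not\leq k_{1:d}\}$: if some component of $S_1$ exceeds the corresponding component of $k_{1:d}$, the same holds for every later $S_n$. In particular $S_1\neq 0_d$ forces $N(0_d)=0$ and $Z_{0_d}=J_0$, so the conditioning $Z_{0_d}=i$ is the same as $J_0=i$. Conditioning on $J_0=i$, the event $\{X_1\not\leq k_{1:d}\}$ has probability $1-\mathbb{P}(X_1\leq k_{1:d}\mid J_0=i)=\widetilde{H}_i(k_{1:d})$.

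Next I would split $P_{ij}(k_{1:d})$ according to whether the first jump is counted. On $\{S_1\not\leq k_{1:d}\}$ one has $N(k_{1:d})=0$ and hence $Z_{k_{1:d}}=J_0=i$, contributing $\delta_{ij}\widetilde{H}_i(k_{1:d})$, which is exactly the $(i,j)$ entry of $\widetilde{H}=\mathrm{dg}(\widetilde{H}_{\!E})$. On the complementary event I would condition on the first transition $(J_1,S_1)=(r,l)$ with $l\leq k_{1:d}$; the value $l=0_d$ contributes nothing since $q_{ir}(0_d)=0$. By the Markov renewal property \eqref{eq:MRC} and homogeneity, the shifted chain $(J_{n+1},S_{n+1}-l)_{n\geq 0}$ is again a dt-MRC with kernel $q$ started from $r$ and independent of the past given $J_1=r$; monotonicity gives $\widetilde{N}(k_{1:d}-l)=N(k_{1:d})-1$ for its counting process $\widetilde{N}$, so its associated semi-Markov process read at the remaining time $k_{1:d}-l$ equals $Z_{k_{1:d}}$ on this event. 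Thus $\mathbb{P}(Z_{k_{1:d}}=j\mid J_1=r,S_1=l,J_0=i)=P_{rj}(k_{1:d}-l)$, and summing $q_{ir}(l)P_{rj}(k_{1:d}-l)$ over $r\in E$ and $l\leq k_{1:d}$ gives $[q*P]_{ij}(k_{1:d})$ by Remark \ref{rem:convolution}.

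Combining the two contributions yields $P=\widetilde{H}+q*P$, which is the renewal equation \eqref{mt-mre} with $G=\widetilde{H}$; its unique solution is $P=u*\widetilde{H}$, completing the proof. I expect the main obstacle to be the rigorous justification of the regeneration step, i.e. verifying the strong multi-time Markov renewal property: that conditioning on $(J_1,S_1)=(r,l)$ yields a fresh independent copy of the chain started at $r$, and that the identity $\widetilde{N}(k_{1:d}-l)=N(k_{1:d})-1$ correctly transfers $Z_{k_{1:d}}$ to the shifted semi-Markov process. Once this is in place, the remaining manipulations are purely algebraic and reduce the double sum over $(r,l)$ to the convolution $q*P$.
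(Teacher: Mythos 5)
Your proof is correct and follows essentially the same route as the paper: a first-jump decomposition on the event $A_{k_{1:d}}=\{S_1\leq k_{1:d}\}$ and its complement, yielding the renewal equation $P=\widetilde{H}+q*P$ whose unique solution is $u*\widetilde{H}$ (the paper states this decomposition and defers the details to the unidimensional proof in Barbu--Limnios). You in fact supply the multi-time details the paper leaves implicit, correctly observing that under the partial order the complement event is $\{S_1\not\leq k_{1:d}\}$ --- so the diagonal term involves the complementary distribution $\widetilde{H}_i$ rather than the survival function $\overline{H}_i$ --- and verifying the shift identity $\widetilde{N}(k_{1:d}-l)=N(k_{1:d})-1$.
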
 
\begin{proof}
Let us consider the event $A_{k_{1:d}}:=\left\{S_{1}\leq k_{1:d}\right\}$. The transition probability $P_{ij}(k_{1:d})$ can be decomposed using the complementary events $A_{k_{1:d}}$ and $A^{c}_{k_{1:d}}$. The rest of the  proof  is similar to the proof of  Proposition (3.2) in \cite{barbu2009semi}.
\end{proof}
 \begin{definition} Let  $(J,S)$ be a multi-time Markov renewal chain. We denote by  $S^{(j)}:=(S_{n}^{(j)})_n$ the successive passage times in a fixed state $j\in E$, i.e.
  \begin{align*}
   S_{0}^{(j)}&=S_{m},\,\textrm{where}\ \,     m = \min\{ l \in \mathbb{N} \,|\, J_{l}=j \},&\\
      S_{n}^{(j)}&=S_{m},\,\textrm{where}\ \,      m = \min\{ l \in \mathbb{N}\, |\, J_{l}=j,\,S_{l} > S_{n-1}^{(j)} \}.&
  \end{align*}
  \end{definition}
From the above definition, we get directly that $S^{(j)}$ is a simple multi-time renewal chain (mt-RC) on the event  $\{J_{0}=j\}$. Since in general, $S^{(j)}$ is a delayed mt-RC, the chain $S^{(j)}-S_{0}^{(j)}$ plays the role of the associated simple multi-time renewal chain. 
Additionally, $S_{n}^{(j,u)}$ will refer to the $u$-th time component of   $S_{n}^{(j)}$.

For the rest, let us also define the counting process of the number of visits to a state $i\in E$ for the embedded Markov chain $J$ up to time $k_{1:d}$:   
\begin{equation*}
    \widetilde{N}_{i}(k_{1:d}) := \displaystyle  \sum_{n=0}^{N(k_{1:d})} \mathbbm{1}_{\{J_{n}=i\}} = \displaystyle  \sum_{n=0}^{k_{1}+\ldots + k_d} \mathbbm{1}_{\{J_{n}=i,\, S_{n} \leq k_{1:d} \}}
\end{equation*}
and its associated marginal counting processes:
\begin{equation*}
    \widetilde{N}^{[u]}_{i}(k_{u}) := \displaystyle  \sum_{n=0}^{N^{[u]}(k_{u})} \mathbbm{1}_{\{J_{n}=i\}} = \displaystyle  \sum_{n=0}^{k_{u}} \mathbbm{1}_{\{J_{n}=i,\, S^{[u]}_{n} \leq k_{u} \}}, \quad 1 \leq u\leq d.
\end{equation*}
Similarly to \eqref{N_as_min}, we have
\begin{equation}\label{tilde-tilde}
    \widetilde{N}_{i}(k_{1:d}) = \min_{u}\left\{\widetilde{N}^{[u]}_{i}(k_{u})\right\}.
\end{equation}
Also, we notice that the process $\widetilde{N}_{i}-1$ is the counting process associated to the delayed  renewal chain $S^{(i)}$, recording the number of visits to state $i$. Similarly, $\widetilde{N}^{[u]}_{i}-1$ is the counting process associated to the time uni-dimensional delayed renewal chain $S^{(i,u)}$.

       The following definition extends the Markov renewal function to the time multidimensional case. 
\begin{definition} The  matrix sequence $U\in \mathcal{M}_{s}(\mathbb{N}^{d}) $ given by
 \begin{equation*}
    U_{ij}(k_{1:d}) := \mathbb{E}_{i} \left[\widetilde{N}_{j}(k_{1:d})\right], \quad i,j  \in  E,\ k_{1:d} \in \ \mathbb{N}^{d},
\end{equation*}
is called the time $d$-dimensional \textit{Markov renewal function} of the dt-MRC $(J,S)$. 
\end{definition}
Showing that $U$ satisfies a multi-time Markov renewal equation, we get a simplified representation.
 \begin{proposition}\label{markov-function2}
  The multi-time Markov renewal function $U$  satisfies the multi-time MRE:
  \begin{equation*}
      U=\mathbbm{I}_s + q* U,
  \end{equation*}
  and can thus be represented as 
  \begin{equation*}
    U =   \mathbbm{I}_s*u.  
  \end{equation*}
  \end{proposition}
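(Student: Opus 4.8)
The plan is to first show that $U$ solves a discrete multi-time Markov renewal equation of the form \eqref{mt-mre} and then to read off its closed form from the corollary that gives the unique solution $L=u*G$. I would begin from a series representation of $U$. Since $(S_n)$ strictly increases, each jump raises $\sum_{u=1}^{d}S_n^{[u]}$ by at least one, so $\sum_{u=1}^{d}S_n^{[u]}\ge n$; hence $\{S_n\le k_{1:d}\}=\{n\le N(k_{1:d})\}$ and this event is empty as soon as $n>k_1+\dots+k_d$. Writing
\begin{equation*}
\widetilde{N}_j(k_{1:d})=\sum_{n=0}^{\infty}\mathbbm{1}_{\{J_n=j,\,S_n\le k_{1:d}\}},
\end{equation*}
this is an a.s.\ finite sum with at most $k_1+\dots+k_d+1$ nonzero terms, so taking $\mathbb{E}_i$ termwise (legitimate by nonnegativity and this bound) gives
\begin{equation*}
U_{ij}(k_{1:d})=\sum_{n=0}^{\infty}\mathbb{P}\!\left(J_n=j,\,S_n\le k_{1:d}\mid J_0=i\right).
\end{equation*}

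Next I would separate the term $n=0$ and handle $n\ge1$ by a first-step (regeneration) argument. Because $S_0=0_d\le k_{1:d}$, the $n=0$ term equals $\mathbbm{1}_{\{i=j\}}$ for every $k_{1:d}$; as a matrix sequence this constant contribution is the summation operator $\mathrm{dg}(\mathbbm{1}_s)$, whose value at every index is $I_s$ (so it is $\mathrm{dg}(\mathbbm{1}_s)$, not the convolutional unit $\mathbbm{I}_s$). For $n\ge1$ I would condition on the first jump $(J_1,S_1)=(r,l)$ and invoke the homogeneous Markov renewal property: given $(J_1,S_1)=(r,l)$, the shifted chain $(J_{1+m},\,S_{1+m}-S_1)_{m\ge0}$ is again a $dt$-MRC with kernel $q$ started at $r$, so
\begin{equation*}
\mathbb{P}\!\left(J_n=j,\,S_n\le k_{1:d}\mid J_0=i,\,J_1=r,\,S_1=l\right)=\mathbb{P}\!\left(J_{n-1}=j,\,S_{n-1}\le k_{1:d}-l\mid J_0=r\right),
\end{equation*}
the right-hand side being $0$ unless $l\le k_{1:d}$. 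Multiplying by $\mathbb{P}(J_1=r,S_1=l\mid J_0=i)=q_{ir}(l)$, summing over $r$ and over $l\le k_{1:d}$, and reindexing $m=n-1$ reassembles $\sum_{r}\sum_{l\le k_{1:d}}q_{ir}(l)\,U_{rj}(k_{1:d}-l)=[q*U]_{ij}(k_{1:d})$. Together with the $n=0$ term this yields the renewal equation
\begin{equation*}
U=\mathrm{dg}(\mathbbm{1}_s)+q*U.
\end{equation*}

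It then remains to invert. As $q(0_d)=O_s$, we have $(\mathbbm{I}_s-q)(0_d)=I_s$, so $\mathbbm{I}_s-q$ is convolutionally invertible with inverse $u$ by Theorem \ref{algebraicp1}, and the corollary to \eqref{mt-mre} applies with $G=\mathrm{dg}(\mathbbm{1}_s)$, giving the unique solution $U=u*\mathrm{dg}(\mathbbm{1}_s)$. Finally, convolving by $\mathrm{dg}(\mathbbm{1}_s)$ on either side realizes the componentwise summation operator, that is $[A*\mathrm{dg}(\mathbbm{1}_s)](k_{1:d})=\sum_{l\le k_{1:d}}A(l)=[\mathrm{dg}(\mathbbm{1}_s)*A](k_{1:d})$ for every $A\in\mathcal{M}_s(\mathbb{N}^d)$ (the underlying scalar convolution being commutative), so $\mathrm{dg}(\mathbbm{1}_s)$ is central and $U=u*\mathrm{dg}(\mathbbm{1}_s)=\mathrm{dg}(\mathbbm{1}_s)*u$, the claimed representation.

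The step I expect to be the main obstacle is the conditioning identity for $n\ge1$: one must justify rigorously that conditioning on the first jump regenerates an independent copy of the $dt$-MRC started at the new state $r$ (the homogeneous Markov renewal property applied to the shifted chain), and one must track the partial order on $\mathbb{N}^d$ so that the shift $k_{1:d}\mapsto k_{1:d}-l$ ranges only over $l\le k_{1:d}$ — precisely the range appearing in the convolution $q*U$. Once this is secured, the termwise interchange of summation and expectation and the final inversion are routine.
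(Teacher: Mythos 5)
Your argument is correct, and it supplies exactly the proof the paper omits (Proposition \ref{markov-function2} is stated without proof): the series representation $U_{ij}(k_{1:d})=\sum_{n\geq 0}\mathbb{P}_i\left(J_n=j,\,S_n\leq k_{1:d}\right)$, justified by your bound $\sum_{u=1}^{d}S_n^{[u]}\geq n$ so that at most $k_1+\dots+k_d+1$ terms are nonzero, followed by a first-step decomposition via the Markov renewal property and then uniqueness of the solution of the MRE (\ref{mt-mre}), is the standard route, i.e., the multidimensional analogue of the one-dimensional argument in \cite{barbu2009semi}. Your handling of the two genuinely multidimensional points --- the partial order in the shift $k_{1:d}\mapsto k_{1:d}-l$ over $l\leq k_{1:d}$, and the regeneration of the shifted chain $(J_{1+m},S_{1+m}-S_1)$ under homogeneity --- is sound. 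Moreover, you rightly flag that the proposition as printed cannot hold literally: the $n=0$ term contributes the constant sequence equal to $I_s$ at every index, which is $\mathrm{dg}(\mathbbm{1}_s)$ and not the convolutional unit $\mathbbm{I}_s$; taken at face value, $U=\mathbbm{I}_s+q*U$ would force $U=u$, contradicting the fact that $U$ cumulates over $\{S_n\leq k_{1:d}\}$ while $u$ records $\{S_n=k_{1:d}\}$ (e.g., $U_{ii}(k_{1:d})\geq 1$ for all $k_{1:d}$). The corrected form you prove, $U=\mathrm{dg}(\mathbbm{1}_s)+q*U$ and hence $U=u*\mathrm{dg}(\mathbbm{1}_s)=\mathrm{dg}(\mathbbm{1}_s)*u$ (your centrality observation $[A*\mathrm{dg}(\mathbbm{1}_s)]_{ij}=a_{ij}*\mathbbm{1}=[\mathrm{dg}(\mathbbm{1}_s)*A]_{ij}$ is valid since $\mathrm{dg}(\mathbbm{1}_s)$ is a scalar matrix over the commutative ring of sequences), is also the version consistent with Corollary \ref{Rel-mu}, which requires $G=\mathrm{dg}(\mathbbm{1}_s)*g=\left(U-\mathrm{dg}(\mathbbm{1}_s)\right)*\mathrm{dg}(u)^{(-1)}$. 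In short: the proof is complete and correct, and the replacement of $\mathbbm{I}_s$ by $\mathrm{dg}(\mathbbm{1}_s)$ in the statement is a necessary correction, not an optional variant.
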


 \begin{definition}\label{quantities-SMC}For any states  $i,j\in E$ we consider
  \begin{itemize} 
     \item[(i)] the joint probability mass function $g_{ij}$ of the first hitting time of state $j$, starting from state $i$ : 
      \begin{equation*}
          g_{ij}(k_{1:d}):= \mathbb{P}_{i}\left(S_{0}^{(j)}=k_{1:d} \right). 
      \end{equation*}
    If $j=i$, then $g_{ii}$ corresponds to the probability mass function of the recurrence time of state $i$, 
      \item[(ii)] the joint cumulative distribution of the first hitting time of state $j$, starting from state $i$ : 
      \begin{equation*}
          G_{ij} := \mathbbm{1}*g_{ij}.
      \end{equation*}
     If $j=i$, then $G_{ii}$ corresponds to the joint cumulative distribution of the recurrence time of $i$,
      \item[(iii)] the survival function of the first hitting time of state $j$, starting from state $i$ :
      \begin{equation*}
          \overline{G}_{ij}(k_{1:d}) := \mathbb{P}_{i}\left(S_{0}^{(j)}>k_{1:d} \right).
      \end{equation*}
      If $j=i$, then $\overline{G}_{ii}$ corresponds to the
      survival function of the recurrence time of state $i$,
     \item[(iv)] the \textit{mean first passage time} $\mu_{ij}=(\mu_{ij}^{[u]})_{1\leq u\leq d}$ from state $i$ to state $j$, where
     \begin{equation*}
      \mu_{ij}^{(u)}:=\mathbb{E}_{i}(S_{0}^{(j,u)}).
      \end{equation*}
      For $j=i$, the vector $\mu_{ii}$ is referred to as the \textit{mean recurrence time} of state $i$. The coordinate $\mu_{ii}^{(u)}$ corresponds to the mean recurrence time of state $i$  
       in terms of the 1t-MRC $(J,S^{[u]})$. The same holds true for the mean first passage times.
      \item[(v)]  the mean product of the first passage time components to state $j$ from  state $i$ for the SMC $Z$ : 
      \begin{equation*}
          \mu_{ij}^{(u,v)}:=\mathbb{E}_{i}\left( S_{0}^{(j,u)}\cdot S_{0}^{(j,v)}\right),
      \end{equation*}
      \item[(vi)]  the covariance  of the first passage time components to state $j$ from  state $i$: 
      \begin{equation*}        \gamma_{ij}^{(u,v)}:=\mu_{ij}^{(u,v)}-\mu^{(u)}_{ij}\cdot\mu^{(v)}_{ij}.
      \end{equation*}
  \end{itemize}
  
    \end{definition}

      Let us now define the matrix valued functions $g=(g_{ij})_{i,j\in E}$, $\, \mathrm{dg}(u)=\mathrm{diag}\{u_{jj}\}_{j \in E}$. The latter has a convolutional inverse since $u(0)=I_s$.
    
  \begin{proposition}\label{reneq} The matrix valued functions $u$ and $g$ satisfy the following relation
  \begin{eqnarray}\label{reneq1}
  g=(u-\mathbb{I}_s)*\mathrm{dg}(u)^{(-1)}.
  \end{eqnarray}
  \end{proposition}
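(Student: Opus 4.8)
The plan is to reduce \eqref{reneq1} to the single convolution identity
\[
u-\mathbbm{I}_s = g*\mathrm{dg}(u),
\]
and then to invert. Since $\mathrm{dg}(u)(0_d)=\mathrm{diag}\{u_{jj}(0_d)\}=I_s$ is non-singular, Theorem \ref{algebraicp1} guarantees that $\mathrm{dg}(u)$ is convolutionally invertible, so convolving the displayed identity on the right by $\mathrm{dg}(u)^{(-1)}$ yields $g=(u-\mathbbm{I}_s)*\mathrm{dg}(u)^{(-1)}$ at once. Because $\mathrm{dg}(u)$ is diagonal, the $(i,j)$ component of the target identity reads $u_{ij}-\delta_{ij}\mathbb{I}_1 = g_{ij}*u_{jj}$, so the whole argument reduces to verifying these scalar convolution equations for every $i,j\in E$.

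First I would fix the probabilistic meaning of $u$. From \eqref{inv-E0-q} and the Markov renewal property \eqref{eq:MRC}, an induction on $n$ gives $q^{(n)}_{ij}(k_{1:d})=\mathbb{P}_i(J_n=j,\,S_n=k_{1:d})$, whence $u_{ij}(k_{1:d})=\sum_{n\ge 0}\mathbb{P}_i(J_n=j,\,S_n=k_{1:d})$ is the expected number of jumps of the embedded chain that land on $j$ \emph{exactly} at time $k_{1:d}$, starting from $i$. Because $q(0_d)=O_s$, Lemma \ref{n<k1} ensures that only indices $n\le k_1+\dots+k_d$ contribute, so each $u_{ij}(k_{1:d})$ is a finite sum and all the series appearing below are genuinely finite, with no convergence issues.

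The heart of the proof is the first-passage decomposition. For $i\neq j$, every visit of the embedded chain to $j$ occurs at or after the first-passage jump $m=\min\{l:J_l=j\}$, whose time $S_0^{(j)}$ has law $g_{ij}$; conditioning on $S_0^{(j)}=l$ and regenerating the chain afresh from state $j$ at that instant, the expected number of subsequent visits to $j$ at time $k_{1:d}$ equals $u_{jj}(k_{1:d}-l)$, and summing over $l\le k_{1:d}$ gives $u_{ij}=g_{ij}*u_{jj}$. For $i=j$ the only change is to separate out the trivial visit at jump $n=0$ (at time $0_d$), which contributes $\mathbb{I}_1$; every later visit lies at or after the first \emph{return} to $j$, whose law is the recurrence distribution $g_{jj}$, and the same regeneration yields $u_{jj}=\mathbb{I}_1+g_{jj}*u_{jj}$. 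The two cases combine into the uniform relation $u_{ij}-\delta_{ij}\mathbb{I}_1=g_{ij}*u_{jj}$, i.e. $u-\mathbbm{I}_s=g*\mathrm{dg}(u)$, completing the argument.

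I expect the main obstacle to be the rigorous justification of the regeneration step: one must check that the first-passage (respectively first-return) jump index is a stopping time for $(J,S)$ and that \eqref{eq:MRC} upgrades to a strong, regenerative form there, so that the post-passage increments form an independent fresh copy started from $j$, correctly realigned in time by the shift $k_{1:d}\mapsto k_{1:d}-l$. The most delicate bookkeeping is the diagonal case, where $g_{jj}$ must be read as the first-return (recurrence) law rather than the point mass of $S_0^{(j)}=0_d$ under $\mathbb{P}_j$; here the strict increase $S_n<S_{n+1}$ (despite possibly vanishing individual time components) is what guarantees that the first return occurs at a strictly positive time, making the decomposition of $u_{jj}$ consistent with $g_{jj}(0_d)=0$ and hence with the invertibility of $\mathrm{dg}(u)$.
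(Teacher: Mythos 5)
Your proof is correct and follows essentially the same route as the paper's: you establish the renewal identities $u_{jj}=\mathbb{I}_1+g_{jj}*u_{jj}$ and $u_{ij}=g_{ij}*u_{jj}$ for $i\neq j$ via a first-return/first-passage (simple vs.\ delayed renewal chain) decomposition, assemble them into $u=\mathbbm{I}_s+g*\mathrm{dg}(u)$, and right-convolve with $\mathrm{dg}(u)^{(-1)}$, which is exactly the paper's argument. Your extra care --- identifying $u_{ij}(k_{1:d})=\sum_n \mathbb{P}_i(J_n=j,\,S_n=k_{1:d})$ with finiteness via Lemma~\ref{n<k1}, justifying the regeneration by the strong Markov property, and reading $g_{jj}$ as the first-return law with $g_{jj}(0_d)=0$ --- only makes explicit what the paper's terser proof leaves implicit.
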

  
  \begin{proof}Note that  $u_{jj}$ is a sequence of renewal probabilities of the simple multi-time renewal chain $S^{(j)}$ which records renewals on $j$ with probability mass function $g_{jj}$. Therefore, we have
  \begin{equation*}
        u_{jj}=(\mathbb{I}_{s})_{ij} + g_{jj}*u_{jj}.
  \end{equation*}
  Now, let us consider two distinct states $i,j\in E$. Then, we have a delayed  renewal chain which records the visits on $j$ when the system starts from $i$ with initial distribution  $g_{ij}$. In this case, $u_{ij}$ and $u_{jj}$ are the sequences of renewal probabilities for the delayed  renewal chain and the simple renewal chain respectively. Therefore,  we obtain
  \begin{equation*}
      u_{ij}= g_{ij}*u_{jj}.
  \end{equation*}
  Consequently, from the above we have the following matrix form
  \begin{equation}\label{reneq2}
      u=\mathbb{I}_s+g*\mathrm{dg}(u).
  \end{equation}
  From $(\ref{reneq2})$ we obtain that
  \begin{equation*}
      g*\mathrm{dg}(u)=u-\mathbb{I}_s
  \end{equation*}
  and  we get directly that  (\ref{reneq1}) holds.
  \end{proof}
  From the above proposition,  we get directly  the following corollary.
  \begin{corollary}\label{Rel-mu} The matrix sequence $G=(G_{ij})_{i,j\in E}$ is given by
  \begin{eqnarray*}
  G&=&(U-\mathbbm{I}_s)*\mathrm{dg}(u)^{(-1)}.
  \end{eqnarray*}
  \end{corollary}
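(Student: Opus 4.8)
The plan is to \emph{cumulate} the identity of Proposition~\ref{reneq}, using that $G$ and $U$ are nothing but the images of $g$ and $u$ under the component-wise summation (cumulation) operator, which is realized as left-convolution by $\mathrm{dg}(\mathbbm{1}_s)$. Concretely, I would first record the two structural facts I need: by Definition~\ref{quantities-SMC}(ii), $G_{ij}=\mathbbm{1}*g_{ij}$ for all $i,j\in E$, which in matrix form reads $G=\mathrm{dg}(\mathbbm{1}_s)*g$; and Proposition~\ref{markov-function2} exhibits $U$ as the cumulation of $u$, i.e. $U=\mathrm{dg}(\mathbbm{1}_s)*u$. Both say that the single operator $\mathrm{dg}(\mathbbm{1}_s)*(\,\cdot\,)$ sends the density-type sequences $g,u$ to the distribution-type sequences $G,U$. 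I would also note that $\mathrm{dg}(u)^{(-1)}$ is well defined because $u(0_d)=I_s$ is non-singular (as remarked just before Proposition~\ref{reneq} and guaranteed by Theorem~\ref{algebraicp1}).

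The heart of the argument is then a one-line computation. Taking the representation $g=(u-\mathbb{I}_s)*\mathrm{dg}(u)^{(-1)}$ of Proposition~\ref{reneq} (equivalently the rearranged form \eqref{reneq2}) and convolving on the left by $\mathrm{dg}(\mathbbm{1}_s)$, I would invoke the associativity of $*$ established in Theorem~\ref{algebraicp1} to slide the cumulation operator through the right-hand factor $\mathrm{dg}(u)^{(-1)}$, obtaining
\begin{equation*}
G=\mathrm{dg}(\mathbbm{1}_s)*g=\big[\mathrm{dg}(\mathbbm{1}_s)*(u-\mathbb{I}_s)\big]*\mathrm{dg}(u)^{(-1)}.
\end{equation*}
By bilinearity of $*$, the bracket splits as $\mathrm{dg}(\mathbbm{1}_s)*u-\mathrm{dg}(\mathbbm{1}_s)*\mathbb{I}_s$; the first term is $U$ by Proposition~\ref{markov-function2} and the second is the cumulation of the identity sequence, which is the unit term subtracted from $U$ in the statement. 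Substituting back gives $G=(U-\mathbbm{I}_s)*\mathrm{dg}(u)^{(-1)}$, which is the claim.

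The computation is essentially bookkeeping, so there is no serious analytic obstacle; the only points that genuinely require care are (a) confirming that left-convolution by the central diagonal element $\mathrm{dg}(\mathbbm{1}_s)$ commutes past the remaining factors, which reduces to the associativity and bilinearity of the algebra $(\mathcal{M}_s(\mathbb{N}^d),*)$ from Theorem~\ref{algebraicp1}, and (b) invoking the two cumulation identities $G=\mathrm{dg}(\mathbbm{1}_s)*g$ and $U=\mathrm{dg}(\mathbbm{1}_s)*u$ consistently, so that cumulating the unit sequence $\mathbb{I}_s$ reproduces precisely the term $\mathbbm{I}_s$ appearing in the statement. No limiting argument is needed, since by Lemma~\ref{n<k1} and Remark~\ref{Rem-0e} each convolution above is a finite sum at every fixed $k_{1:d}$, so all manipulations are justified term by term.
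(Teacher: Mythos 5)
Your strategy---cumulate Proposition~\ref{reneq} by left-convolving with $\mathrm{dg}(\mathbbm{1}_s)$, using $G=\mathrm{dg}(\mathbbm{1}_s)*g$ from Definition~\ref{quantities-SMC}(ii) together with the associativity and bilinearity of $*$ from Theorem~\ref{algebraicp1}---is exactly the derivation the paper leaves implicit (its ``proof'' is the single phrase ``we get directly''), and everything through the regrouping $G=\bigl[\mathrm{dg}(\mathbbm{1}_s)*u-\mathrm{dg}(\mathbbm{1}_s)*\mathbbm{I}_s\bigr]*\mathrm{dg}(u)^{(-1)}$ is sound. The gap is at your point (b): cumulating the identity sequence does \emph{not} reproduce $\mathbbm{I}_s$. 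Since $\mathbbm{I}_s$ is the unit of the algebra, $\mathrm{dg}(\mathbbm{1}_s)*\mathbbm{I}_s=\mathrm{dg}(\mathbbm{1}_s)$, the sequence constantly equal to $I_s$, which differs from $\mathbbm{I}_s$ at every $k_{1:d}\neq 0_d$. What your computation actually proves is
\begin{equation*}
G=\bigl(U-\mathrm{dg}(\mathbbm{1}_s)\bigr)*\mathrm{dg}(u)^{(-1)},\qquad \text{with } U=\mathrm{dg}(\mathbbm{1}_s)*u,
\end{equation*}
and the discrepancy $\bigl(\mathrm{dg}(\mathbbm{1}_s)-\mathbbm{I}_s\bigr)*\mathrm{dg}(u)^{(-1)}$ is nonzero in general, so this is not a cosmetic difference. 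For a concrete check take $d=s=1$ and $g(k)=\mathbbm{1}_{\{k=1\}}$ (deterministic unit recurrence), so that $u=\mathbbm{1}$, $U(k)=k+1$, and $u^{(-1)}(0)=1$, $u^{(-1)}(1)=-1$, $u^{(-1)}(k)=0$ otherwise; then $[(U-\mathbbm{1})*u^{(-1)}](1)=1=G(1)$, whereas $[(U-\mathbb{I}_1)*u^{(-1)}](1)=2$.

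The root of the problem is that you tacitly corrected one misprint in the paper but not its companion. Proposition~\ref{markov-function2} literally asserts $U=\mathbbm{I}_s*u$, which would collapse to $U=u$; your reading $U=\mathrm{dg}(\mathbbm{1}_s)*u$ is the probabilistically correct repair, since $U_{ij}(k_{1:d})=\sum_{n\geq 0}\mathbb{P}_i(J_n=j,\,S_n\leq k_{1:d})=\sum_{l\leq k_{1:d}}u_{ij}(l)$. But once you adopt it, consistency forces the subtracted term in Corollary~\ref{Rel-mu} to be $\mathrm{dg}(\mathbbm{1}_s)$ rather than $\mathbbm{I}_s$: the printed corollary carries the same typo, and no reading makes it literally true (taking $U=u$ instead would turn the statement into $G=g$, contradicting $G_{ij}=\mathbbm{1}*g_{ij}$). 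The right outcome of your otherwise correct argument is therefore to prove the corrected identity and flag the misprint; asserting $\mathrm{dg}(\mathbbm{1}_s)*\mathbbm{I}_s=\mathbbm{I}_s$ in order to land on the printed formula is a genuine error, not bookkeeping.
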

 
  \begin{remark}
 We derive from  relation (\ref{reneq2})  that $q$ can be determined by
 \begin{equation*}
      q=\mathbb{I}_s - (\mathbb{I}_s+g*\mathrm{dg}(u))^{(-1)}.
  \end{equation*}
 \end{remark}

In order to compute the moments of $S^{(j)}_0$, we require an alternative relationship between the functions $g$ and $q$, which is used solely for theoretical purposes.
This relation can be derived using an approach similar to the one-dimensional case, as shown in  \cite{HMRC}.
\begin{proposition}
    The  matrix valued functions $g$ and $q$ are related by the following expression:
    \begin{equation} \label{soj}
    g_{ij}= q_{ij}+\sum_{\substack{r\neq j }}q_{ir}* g_{rj}.
\end{equation}
\end{proposition}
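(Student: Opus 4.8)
The plan is to carry out a first-jump (regeneration) analysis on the chain $(J,S)$ started from state $i$, decomposing the first passage time to $j$ according to the state reached after the first transition. Since $S_0=0_d$ and $(S_n)$ strictly increases, the first jump occurs at $S_1=X_1$, and by the Markov renewal property together with homogeneity (Definition \ref{tb-MRC}) we have $\mathbb{P}_i(J_1=r,\,X_1=l)=q_{ir}(l)$ for every $r\in E$ and $l\in\mathbb{N}^d$. This is the only distributional input needed at the first step.

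First I would split the event $\{S_0^{(j)}=k_{1:d}\}$ under $\mathbb{P}_i$ according to the value of $J_1$. If $J_1=j$, then the minimal index $m=\min\{l:J_l=j\}$ equals $1$ (for $i\neq j$; for $i=j$ this is the first return, consistent with the recurrence-time reading of $g_{ii}$), so $S_0^{(j)}=S_1=X_1$ and this case contributes exactly $q_{ij}(k_{1:d})$ to $g_{ij}(k_{1:d})$. If instead $J_1=r$ with $r\neq j$, then state $j$ has not been visited at the first jump and the first passage is realized strictly later.

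For this second case the key step is to apply the Markov renewal property at the first jump: conditionally on $J_1=r$, the shifted chain $(J_{n+1},\,S_{n+1}-S_1)_{n\geq 0}$ is again a $dt$-MRC with the same kernel $q$ started from $r$, and is conditionally independent of $(J_1,S_1)$. Hence the residual time needed to reach $j$ from this point has law $g_{rj}$, and by additivity of the sojourn times $S_0^{(j)}=X_1+\widehat S$, where $\widehat S$ is distributed as $g_{rj}$ and (conditionally on $J_1=r$) independent of $X_1$. Writing $X_1=l$ and $\widehat S=l'$ with $l+l'=k_{1:d}$ and summing over $r\neq j$ gives the contribution $\sum_{r\neq j}\sum_{l+l'=k_{1:d}}q_{ir}(l)\,g_{rj}(l')=\sum_{r\neq j}[q_{ir}*g_{rj}](k_{1:d})$ by Definition \ref{def:convolution}.

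Combining the two cases yields $g_{ij}(k_{1:d})=q_{ij}(k_{1:d})+\sum_{r\neq j}[q_{ir}*g_{rj}](k_{1:d})$, which is precisely (\ref{soj}). I expect the main obstacle to be the rigorous justification of the conditional independence and the identification of the residual law with $g_{rj}$; this is the multi-time analogue of the strong-Markov/regeneration argument used in the one-dimensional setting \cite{HMRC}. Once the shift invariance of the kernel is established, the passage from the componentwise additivity $S_0^{(j)}=X_1+\widehat S$ to the convolution structure is immediate from the definition of $*$, and no further estimates are required.
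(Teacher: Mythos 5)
Your proposal is correct and takes essentially the same route as the paper: a first-jump decomposition over $(J_1,X_1)$, separating the case $J_1=j$ (which contributes $q_{ij}$) from $J_1=r\neq j$, and applying the strong Markov renewal property at the first jump to identify the residual passage-time law as $g_{rj}$, so that summing over $l+l'=k_{1:d}$ produces $\sum_{r\neq j}q_{ir}*g_{rj}$. If anything, your write-up is slightly more careful than the paper's, whose displayed strong-Markov identity omits the kernel factor $q_{ir}(x_{1:d})$ that you correctly keep track of.
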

\begin{proof}
    Conditioning on the first  state visited by $J$, we obtain:
\begin{eqnarray*}
    \mathbb{P}_i\left[ S^{(j)}_0=k_{1:d}\right]&=&\sum_{\substack{r \\ x_{1:d}}}\mathbb{P}_i\left[ S^{(j)}_0=k_{1:d},\,J_{1}=r,\,X_{1}=x_{1:d}\mid J_0=i\right]\\
    &=&\mathbb{P}_i\left[ X_{1}=k_{1:d},\,J_{1}=j\right]+\sum_{\substack{r\neq j \\ x_{1:d}}}\mathbb{P}_i\left[ S^{(j)}_0=k_{1:d},\,J_{1}=r,\,X_{1}=x_{1:d}\right].
\end{eqnarray*}
By applying  the strong Markov property, we get that for all $r\neq j$:
\begin{equation*}
 \mathbb{P}_i\left[ S^{(j)}_0=k_{1:d},\,J_{1}=r,\,X_{1}=x_{1:d}\right]=  \mathbb{P}_r\left[ S^{(j)}_0=k_{1:d}-x_{1:d}\right].
\end{equation*}

Therefore,  equation (\ref{soj}) holds.
\end{proof}

\begin{remark}
According to  (\ref{soj}), we observe  the following properties:
   \begin{itemize}
    \item [(i)] if  $J_{1}=j$, $S_{0}^{(j)}$ represents the passage time from  state $i$ to state  $j$.
    \item[(ii)] if $J_{1}=r\neq j$, then  $S_{0}^{(j)}$  and $S_{0}^{(j)}+\left(X_{1}\right)_{ir}$ are identically distributed.   
\end{itemize} 
Furthermore, the moments of any function $\phi$ of $S_{0}^{(j)}$ can be determined using equation (\ref{soj}). Specifically:
\begin{eqnarray}\label{fmoments}
    \mathbb{E}_{i}\left[ \phi\left(S^{[j]}\right)\right]=p_{ij}\cdot \mathbb{E}_{ij}\left[ \phi\left(X_1\right)\right]+\sum_{k_{1:d}}\sum_{\substack{r\neq j \\ x_{1:d}}} \phi(k_{1:d})\cdot p_{ir} \cdot f_{ir}(x_{1:d})\cdot g_{rj}(k_{1:d}-x_{1:d})
\end{eqnarray}
where the summation extends over all $k_{1:d}$ and $r\neq j$.
\end{remark}

Next, we define some useful classes of multi-time Markov renewal chains as multidimensional extensions of well-known classes of time-univariate MRCs.
  \begin{definition}\label{def:erg}
  Let $(J,S)$ be a mt-MRC and $(J,S^{[u]})$, $1\leq u \leq d$, the associated marginal Markov renewal chains. The process $(J,S)$ is said to be
  \begin{enumerate} 
      \item[$(i)$]  \textit{recurrent} if  all the marginal Markov renewal chains are recurrent,
      \item[$(ii)$] \textit{positive recurrent} if all marginal-MRCs are positive recurrent,
      \item[$(iii)$]  \textit{$r_{1:d}$--periodic} if the chain $(J,S^{[u]})$ is periodic with period $r_u$, for $1\leq u \leq d$. If additionally, $r_u=r$, for $1\leq u \leq d$, then $(J,S)$ is said to be simply \textrm{$r$--periodic} and an $1$--periodic mt-MRC is also referred to as \textit{aperiodic},
      \item[$(iv)$] \textit{ergodic} if and only if $(J,S)$ is positive recurrent and aperiodic,
      \item[$(v)$] \textit{irreducible} if and only if $J$ is irreducible as a Markov chain.
  \end{enumerate}
  \end{definition}

  For an ergodic  Markov chain $J$, its limit 
distribution is denoted by the stochastic vector $\nu$, which we use to compute the following moments.

\begin{proposition}\label{erg}For an irreducible and ergodic mt-MRC $(J,S)$, we have that
 \begin{eqnarray*}
 \mu^{(u)}_{jj}&=&\frac{\sum_{i=1}^{s}\nu_i\cdot m^{[u]}_i}{\nu_{j}}=:\frac{\overline{m}^{[u]}}{\nu_{i}}\quad \text{and} \quad     \mu_{jj}=\frac{\sum_{i=1}^{s}\nu_i\cdot m_i}{\nu_{j}}=:\frac{\overline{m}}{\nu_{j}},\\
     \mu_{jj}^{(u,v)}&=&\frac{\sum_{i=1}^{s}\nu_i\cdot m_i^{[u,v]}}{\nu_{j}} + \frac{\sum_{i=1}^{s}\sum_{r\neq j} \nu_i\cdot p_{ir}\left[ m^{[u]}_{ir}\cdot \mu^{(v)}_{rj}+m^{[v]}_{ir}\cdot \mu^{(u)}_{rj} \right] }{\nu_{j}}.
 \end{eqnarray*}

 \end{proposition}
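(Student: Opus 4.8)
The plan is to reduce both the first-moment and the second-moment identities to a single first-passage recursion obtained by conditioning on the first jump, and then to exploit the stationarity relation $\sum_i \nu_i p_{ir} = \nu_r$ of the embedded chain $J$ to telescope the recursion and isolate the recurrence-time moments. Throughout I would use the decomposition recorded in relation (\ref{soj}) together with its probabilistic reading in the subsequent remark: conditionally on $J_1 = r \neq j$, the first passage time $S_0^{(j)}$ started from $i$ splits as $X_1 + T$, where $T$ is an independent copy of the first passage time from $r$ to $j$, while on $\{J_1 = j\}$ it equals $X_1$. The strong Markov (Markov renewal) property, already invoked in the proof of (\ref{soj}), justifies this independence given the pair $(J_0, J_1)$.

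First I would treat the mean passage times. Taking $\phi(S) = S^{(u)}$ in the conditioning argument of (\ref{fmoments}) and using Remark \ref{mean-sojourn} ($m_i^{[u]} = \sum_r p_{ir} m_{ir}^{[u]}$) yields the linear system
\begin{equation*}
\mu_{ij}^{(u)} = m_i^{[u]} + \sum_{r \neq j} p_{ir}\, \mu_{rj}^{(u)}, \qquad i \in E.
\end{equation*}
Multiplying by $\nu_i$, summing over $i$, and applying $\sum_i \nu_i p_{ir} = \nu_r$ gives
\begin{equation*}
\sum_i \nu_i \mu_{ij}^{(u)} = \overline{m}^{[u]} + \sum_{r \neq j} \nu_r\, \mu_{rj}^{(u)}.
\end{equation*}
The sum $\sum_{i \neq j} \nu_i \mu_{ij}^{(u)}$ on the left and $\sum_{r\neq j} \nu_r \mu_{rj}^{(u)}$ on the right cancel, leaving $\nu_j \mu_{jj}^{(u)} = \overline{m}^{[u]}$, which is the first asserted identity; collecting the components $u = 1, \dots, d$ produces the vector form for $\mu_{jj}$.

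For the cross moment I would take $\phi(S) = S^{(u)} S^{(v)}$. Expanding $(X_1^{[u]} + T^{(u)})(X_1^{[v]} + T^{(v)})$ on the event $\{J_1 = r \neq j\}$ and using the independence of $X_1$ and $T$ produces the recursion
\begin{equation*}
\mu_{ij}^{(u,v)} = m_i^{[u,v]} + \sum_{r \neq j} p_{ir}\big[m_{ir}^{[u]}\mu_{rj}^{(v)} + m_{ir}^{[v]}\mu_{rj}^{(u)}\big] + \sum_{r \neq j} p_{ir}\,\mu_{rj}^{(u,v)},
\end{equation*}
where Remark \ref{mean-sojourn} is again used to collapse the diagonal and off-diagonal $m_{ir}^{[u,v]}$ contributions into $m_i^{[u,v]}$. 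This has exactly the same structure as the first-moment recursion, with the bracketed cross terms acting as a known forcing term because the $\mu_{rj}^{(u)}$ and $\mu_{rj}^{(v)}$ have already been determined. Summing against $\nu$ and cancelling as before yields the stated expression for $\mu_{jj}^{(u,v)}$.

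I expect the main obstacle to be the clean bookkeeping in the second-order expansion: one must carefully separate the four terms of the product, verify that the mixed contribution $\mathbb{E}[X_1^{[u]}]\,\mathbb{E}[T^{(v)}]$ genuinely factorizes as $m_{ir}^{[u]}\mu_{rj}^{(v)}$ through the conditional independence, and confirm that the diagonal contribution $m_{ij}^{[u,v]}$ merges with the off-diagonal ones into $m_i^{[u,v]}$. The telescoping cancellation itself is robust and identical in both cases, relying only on the restriction $r \neq j$ and on stationarity of $\nu$; the delicate point is ensuring the forcing term is assembled correctly before the summation is carried out.
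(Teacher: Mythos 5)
Your proposal is correct and follows essentially the same route as the paper: conditioning on the first visited state $J_1$, using the strong Markov decomposition $S_0^{(j)} \stackrel{d}{=} X_1 + T$ with $(Y^{[u]},Y^{[v]}) \sim F_{ir}$ independent of the residual passage time when $J_1 = r \neq j$, collapsing the $m_{ir}^{[u,v]}$ terms via Remark \ref{mean-sojourn}, and then weighting by $\nu_i$ and telescoping with the stationarity relation $\sum_i \nu_i p_{ir} = \nu_r$. Both your recursion for $\mu_{ij}^{(u,v)}$ and the cancellation argument match the paper's computation exactly, so no gap remains.
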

  
  \begin{proof} There are different ways to derive the desired relations. The first can be proven via (\ref{fmoments}). For the second one,  we use the double expectation theorem conditioning on the first visited state of $J$, so
  \begin{equation*}
   \mu_{ij}^{(u,v)}=  \mathbb{E}_i \left[S^{(j,u)}_{0}\cdot S^{(j,v)}_{0}\right]=\sum_{r}p_{ir} \cdot \mathbb{E}_i \left[S^{(j,u)}_{0}\cdot S^{(j,v)}_{0}\mid J_{1}=r\right].
  \end{equation*}
  If $r=j$, then $S^{(j)}_0$ represents simply the sojourn time at state $i$ given that $j$ is the next visited state, so
  \begin{equation*}
   \mathbb{E}_i \left[S^{(j,u)}_{0}\cdot S^{(j,v)}_{0}\mid J_{1}=r\right] =\mathbb{E}_i\left[X^{[u]}_{1}\cdot X^{[v]}_{1}\mid J_{1}=j\right]=m_{ij}^{[u,v]} . 
  \end{equation*}
  
  For the case $r\neq j$, we note that:
  \begin{equation*}
      \left(S^{(j,u)}_{0}\cdot S^{(j,v)}_{0}\mid J_{0}=i,\,J_{1}=r\right)\stackrel{d}{=}\left(\left[S^{(j,u)}_{0}+Y^{[u]}\right]\cdot \left[S^{(j,v)}_{0}+Y^{[v]}\right]\mid J_{0}=r\right),
  \end{equation*}
  where  the random vector $\left(Y^{[u]},\,Y^{[v]}\right)$ follows the cdf $F_{ir}$ and is independent of $S^{(j)}_0$.

  Therefore, we have
\begin{equation*}
  \mathbb{E}_i \left[S^{(j,u)}_{0}\cdot S^{(j,v)}_{0}\mid J_{1}=r\right]= \mathbb{E}_r \left[\left(S^{(j,u)}_{0}+Y^{[u]}\right)\cdot \left(S^{(j,v)}_{0}+Y^{[v]}\right)\right],\, r\neq j.
\end{equation*}
Now, notice that
\begin{eqnarray*}
    \mathbb{E}_r \left[Y^{[u]}\cdot \left(S^{(j,v)}_{0}+Y^{[v]}\right)\right]&=&\mathbb{E}_r \left[Y^{[u]}\cdot S^{(j,v)}_{0}\right]+\mathbb{E}\left[Y^{[u]}\cdot Y^{[v]}\right] \\ 
    &=&
    \mathbb{E} \left[Y^{[u]}\right]\cdot \mathbb{E}_r \left[S^{(j,v)}_{0}\right]+m_{ir}^{[u,v]}
    =m_{ir}^{[u]}\cdot \mu_{rj}^{(v)}+m_{ir}^{[u,v]}.
\end{eqnarray*}
A similar argument yields:
\begin{eqnarray*}
 \mathbb{E}_r \left[S^{(j,u)}_{0}\cdot \left(S^{(j,v)}_{0}+Y^{[v]}\right)\right]  
   &=&
   m_{ir}^{[v]}\cdot \mu_{rj}^{(u)}+\mu_{ir}^{(u,v)}.
\end{eqnarray*}
Combining the above results and summing over all possible $r$, the following equality holds:
\begin{eqnarray*}
    \mu_{ij}^{(u,v)}&=&
    p_{ij}\cdot m_{ij}^{[u,v]} +\sum_{r\neq j}p_{ir}\cdot\left[ m^{[v]}_{ir}\cdot \mu^{[u]}_{rj}+m^{[u]}_{ir}\cdot \mu^{[v]}_{rj}\right] +\sum_{r\neq j }p_{ir}\cdot m_{ir}^{[u,v]}+\sum_{r\neq j} p_{ir}\cdot \mu_{rj}^{(u,v)}\\
    &=&m_{i}^{[u,v]}+\sum_{r\neq j}p_{ir}\cdot\left[ m^{[v]}_{ir}\cdot \mu^{(u)}_{rj}+m^{[u]}_{ir}\cdot \mu^{(v)}_{rj}\right] +\sum_{r\neq j} p_{ir}\cdot \mu_{rj}^{(u,v)}.
\end{eqnarray*}
Multiplying both sides by $\nu_{i}$ and summing over $E$, we have
\begin{eqnarray*}
    \sum_{i}\nu_{i}\cdot \mu_{ij}^{(u,v)}&=&\sum_{i}\nu_{i}\cdot m_{i}^{[u,v]} +\sum_{i}\sum_{r\neq j}\nu_{i}\cdot p_{ir}\cdot\left[ m^{[v]}_{ir}\cdot \mu^{(u)}_{rj}+m^{[u]}_{ir}\cdot \mu^{(v)}_{rj}\right]+\sum_{i}\sum_{r\neq j}\nu_{i}\cdot p_{ir}\cdot \mu_{rj}^{(u,v)}\\
   &=&\overline{m}^{[u,v]}+ \sum_{i}\sum_{r\neq j}\nu_{i}\cdot p_{ir}\cdot\left[ m^{[v]}_{ir}\cdot \mu^{(u)}_{rj}+m^{[u]}_{ir}\cdot \mu^{(v)}_{rj}\right] + \sum_{r\neq j}\nu_{r}\cdot \mu_{rj}^{(u,v)}\\
   &=& \overline{m}^{[u,v]}+ \sum_{i}\sum_{r\neq j}\nu_{i}\cdot p_{ir}\cdot\left[ m^{[v]}_{ir}\cdot \mu^{(u)}_{rj}+m^{[u]}_{ir}\cdot \mu^{(v)}_{rj}\right] +\sum_{r}\nu_{r}\cdot \mu_{rj}^{(u,v)}-\nu_{j}\cdot \mu_{jj}^{(u,v)}.
\end{eqnarray*}
Consequently, 
\begin{equation*}
  \mu_{jj}^{(u,v)}=  \frac{\overline{m}^{[u,v]}+ \sum_{i}\sum_{r\neq j}\nu_{i}\cdot p_{ir}\cdot\left[ m^{[v]}_{ir}\cdot \mu^{(u)}_{rj}+m^{[u]}_{ir}\cdot \mu^{(v)}_{rj}\right]}{\nu_{j}}
\end{equation*}
  \end{proof}
The following result is a direct consequence of Proposition \ref{erg}.
  \begin{corollary}
      The second moment of any recurrence time  is determined by
      \begin{equation*}         \mu^{(u,u)}_{jj}=\frac{\sum_{i=1}^{s}\nu_i\cdot m^{[u,u]}_i}{\nu_{j}} + 2\cdot \frac{\sum_{i=1}^{s}\sum_{r\neq j} \nu_i\cdot p_{ir}\cdot m_{ir}^{[u]}\cdot \mu_{rj}^{(u)} }{\nu_{j}}.
      \end{equation*}
  \end{corollary}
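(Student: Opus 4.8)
The plan is to derive the identity as an immediate specialization of Proposition~\ref{erg}, taking $v=u$ in the general formula for the mixed second moment $\mu_{jj}^{(u,v)}$. Recall that proposition supplies
\begin{equation*}
\mu_{jj}^{(u,v)}=\frac{\sum_{i=1}^{s}\nu_i\, m_i^{[u,v]}}{\nu_{j}} + \frac{\sum_{i=1}^{s}\sum_{r\neq j} \nu_i\, p_{ir}\left[ m^{[u]}_{ir}\, \mu^{(v)}_{rj}+m^{[v]}_{ir}\, \mu^{(u)}_{rj} \right] }{\nu_{j}} ,
\end{equation*}
so the whole task reduces to substituting $v=u$ and simplifying each of the two fractions.

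The first fraction is handled trivially: with $v=u$ the quantity $m_i^{[u,v]}$ becomes the second moment $m_i^{[u,u]}$ of the $u$-th sojourn time in state $i$ (Definition~\ref{moments}(iv)), producing the term $\sum_{i}\nu_i\, m_i^{[u,u]}/\nu_j$. The only point that deserves attention is the bracketed expression inside the double sum. Setting $v=u$ forces the two products $m^{[u]}_{ir}\, \mu^{(v)}_{rj}$ and $m^{[v]}_{ir}\, \mu^{(u)}_{rj}$ to coincide, each equal to $m^{[u]}_{ir}\, \mu^{(u)}_{rj}$; hence the bracket equals $2\, m^{[u]}_{ir}\, \mu^{(u)}_{rj}$. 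Pulling the factor $2$ outside the double summation yields precisely the second term of the claimed formula.

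I expect no genuine obstacle here, since the corollary is a pure specialization rather than a new argument. The only thing to be careful about is the symmetry bookkeeping: the cross term in Proposition~\ref{erg} is written as a sum of two distinct products precisely because, for $u\neq v$, the roles of the two time components are not interchangeable within a single product, whereas on the diagonal these two contributions merge. It is this merging that accounts for the multiplicative factor of $2$ distinguishing the diagonal second moment from the generic mixed moment, and keeping track of it correctly is the sole content of the proof.
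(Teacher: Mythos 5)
Your proposal is correct and matches the paper exactly: the paper states this corollary as a direct consequence of Proposition~\ref{erg}, obtained by setting $v=u$ so that the two cross terms in the bracket coincide and produce the factor of $2$. Your bookkeeping of the merged diagonal terms is precisely the (unwritten) content of the paper's one-line justification.
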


 \section{Applications}
In this section, we present two applications of the proposed methodology. The first focuses on a performance comparison between FFT-based and direct convolution methods. The second application involves the analysis of a parametric two-dimensional Markov Renewal Chain, where we compute several quantities of interest.
  \subsection{ Performing Convolution }
We conducted several comparisons between FFT-based and direct convolution methods applied to matrix-valued sequences in both one and two time dimensions. All numerical experiments were performed using RStudio on a laptop with the following specifications: AMD Ryzen 7 5800H with Radeon Graphics and 16GB of memory.

In the first two tables, we report results for the 2-fold convolution and the computation of the convolutional inverse for the matrix sequence $\left[\mathbb{I}_3-q\right] \in \mathcal{M}_{3}\left(\mathbb{N}\right)$, evaluated up to time 512. Each method was tested over 100 independent replications.

For the 2-fold convolution, the FFT-based method proved significantly faster, requiring only 0.45 seconds compared to 36.34 seconds for the direct method. This corresponds to a speedup factor of approximately 80.75.

Regarding the convolutional inverse, both the Gauss-Jordan and Newton methods outperformed the direct approach by a large margin. Specifically, the Gauss-Jordan method was approximately 42.65 times faster, while the Newton method achieved a speedup factor of about 10.18. Overall, the Gauss-Jordan method was around 4.2 times faster than Newton's method.
\begin{table}[H]
\centering
\begin{tabular}{lccccc}
\hline
\textbf{Test} & \textbf{Replications} & \textbf{Elapsed (s)} & \textbf{Relative} & \textbf{User.self} & \textbf{Sys.self} \\
\hline
 \texttt{Direct}  & 100  &  36.34 &  80.756  &   36.21  &    0.05     \\
\texttt{FFT} & 100 &   0.45 &   1.000 &     0.36 &    0.08 \\

\hline
\end{tabular}
\caption{Benchmark results comparing direct and FFT-based convolution methods for time uni-dimensional matrix sequences.}
\end{table}

\begin{table}[H]
\centering
\begin{tabular}{lccccc}
\hline
\textbf{Test} & \textbf{Replications} & \textbf{Elapsed (s)} & \textbf{Relative} & \textbf{User.self} & \textbf{Sys.self} \\
\hline
 \texttt{Direct} & 100  &36.68 &  42.651   &  36.43  &   0.14\\
\texttt{Newton} &100   & 3.61  &  4.198  &    3.43  &   0.17     \\
\texttt{Gauss-Jordan} &100 &   0.86  &  1.000   &   0.78  &   0.10   \\

\hline
\end{tabular}
\caption{Benchmark results comparing Newton, Gauss-Jordan, and direct methods for convolutional inversion for time uni-dimensional matrix sequences.}
\end{table}

In the following set of experiments, we computed the 2-fold convolution and the convolutional inverse of the matrix sequence $\left[\mathbb{I}_3-q\right] \in \mathcal{M}_{3}\left(\mathbb{N}^{2}\right)$ using $k_1=k_2=32$. Each method was tested in over 100 replications. The user.self and sys.self values confirm a significantly higher CPU time requirement for the direct method. Notably, the FFT-based method was approximately 46 times faster than the direct approach.

For the convolutional inverse, both the Gauss-Jordan and Newton methods again demonstrated clear computational advantages over the direct method. Specifically, the Gauss-Jordan method was approximately 52.44 times faster than the direct approach, and about 5.2 times faster than the Newton method.
\begin{table}[H]
\centering
\begin{tabular}{lccccc}
\hline
\textbf{Test} & \textbf{Replications} & \textbf{Elapsed} & \textbf{Relative} & \textbf{User.self} & \textbf{Sys.self} \\
\hline
\texttt{Direct}  & 100 & 101.43 &  46.315  &   99.70  &   0.77 \\
\texttt{FFT}     & 100   & 2.19 &   1.000  &    2.12  &   0.03  \\

\hline
\end{tabular}
\caption{Benchmark results comparing direct and FFT-based convolution methods for time uni-dimensional matrix sequences.}
\end{table}
\begin{table}[H]
\centering
\begin{tabular}{lccccc}
\hline
\textbf{Test} & \textbf{Replications} & \textbf{Elapsed} & \textbf{Relative} & \textbf{User.self} & \textbf{Sys.self} \\
\hline
\texttt{Direct}     &100 &  89.68 &  52.444 &    89.25 &    0.28   \\
\texttt{Newton}   & 100&    8.97&    5.246 &     8.87 &    0.08  \\
\texttt{Gauss-Jordan} & 100 &   1.71 &   1.000 &     1.54 &    0.17 \\
\hline
\end{tabular}
\caption{Benchmark results comparing Newton, Gauss-Jordan, and direct convolution inverse methods for time 2-dimensional matrix sequences.}
\end{table}

In the final set of experiments, we evaluated the 2-fold convolution and the convolutional inverse of a 3×33×3 matrix sequence using both the FFT-based and direct methods.

For the FFT-based method, we processed two matrix sequences up to time $(128,128)$, whereas for the direct method, the computation was limited to $(32,32)$ due to its higher computational burden.

We observe that, despite the FFT-based method handling a much larger portion of the sequence, it still completed significantly faster than the direct approach.

\begin{table}[H]
\centering
\begin{tabular}{lccccc}
\hline
\textbf{Test} & \textbf{Replications} & \textbf{Elapsed} & \textbf{Relative} & \textbf{User.self} & \textbf{Sys.self} \\
\hline

\texttt{Direct}-up to $(32,32)$     & 100 &  83.09  &  10.68   &  82.71  &   0.241 \\
\texttt{FFT}-up to $(128,128)$  & 100  &  7.78 &    1.00  &    7.63    & 0.10 \\
\hline
\end{tabular}
\caption{Benchmark results comparing direct and FFT-based convolution method for different time 2-dimensional   sizes. }
\end{table}
\begin{table}[H]
\centering
\begin{tabular}{lccccc}
\hline
\textbf{Test} & \textbf{Replications} & \textbf{Elapsed} & \textbf{Relative} & \textbf{User.self} & \textbf{Sys.self} \\
\hline
\texttt{Gauss-Jordan}- up to $(128,128)$&  100  & 38.81 &   1.000  &   35.96  &   2.80  \\
\texttt{Direct}-up to $(32,32)$    & 100&   80.56  &  2.076 &    80.30   &  0.14 \\

\hline
\end{tabular}
\caption{Benchmark results comparing  Gauss-Jordan and direct method for different time 2-dimensional sizes.}
\end{table}
  \subsection{ Moments of MRC}
Let us consider a   three-state 2-t Markov renewal chain $(J,S)$, where  the embedded Markov chain $J$ with state space $\{1,2,3\}$, has the following transition matrix
\begin{equation*}
      P=
  \left( {\begin{array}{ccc}
   0 & \frac{1}{3} & \frac{2}{3}\\
   & & \\
  \frac{1}{5} & 0 & \frac{4}{5} \\
  & & \\
   \frac{3}{4}& \frac{1}{4} & 0
  \end{array} } \right),
  \end{equation*}
  and sojourn times depending only on the  current state $i\, \in \left\{1,2,3\right\}$, specified as follows:
  \begin{itemize}
\item $(X_{n+1}|J_{n}=i,\,J_{n+1}=j)\sim BPoisson(\alpha_i,\beta_i,\,\gamma_{i})$ +(1,1), where BPoisson refers to a bivariate Poisson distribution with parameters $(\alpha_i,\beta_i,\,\gamma_{i})$ \cite{10.2996/kmj/1138846776}.
The corresponding pmf is given by:
\begin{equation*}
    f_{i\bullet}(x_{1:2})=\exp{\left\{-\left(\alpha_i+\beta_i+\gamma_i\right)  \right\}}\frac{\alpha^{x_1}_i}{(x_1)!}\frac{\beta^{x_2}_i}{(x_2)!}\sum_{k=0}^{\min(x_1,x_2)} \binom{x_1}{k}\binom{x_2}{k} k! \left(\frac{\gamma_i}{\alpha_i \cdot \beta_i}\right)^{k}
\end{equation*}
\item First moments of the first sojourn time dimension: $m^{[1]}_{i}$=$\alpha_i+\gamma_{i}+1$.
\item First moments of the second sojourn time dimension:  $m^{[2]}_{i}$=$\beta_i+\gamma_{i}+1$.
\item Covariance term: $c_{i}^{[1,2]}=\gamma_{i}$.
\item Mean product: $m_{i}^{[1,2]}=\gamma_{i}+\left(\alpha_i+\gamma_{i}+1\right)\cdot\left(\beta_i+\gamma_{i}+1\right) $.
  \end{itemize}
  
   Then, the semi-Markov kernel is given by the following matrix-valued function  $q$:
  \begin{equation*}
      q=
  \left( {\begin{array}{ccc}
   0 & \ \frac{1}{3}\cdot f_{12} & \ \ \frac{2}{3}\cdot f_{13}\\
   & & \\
   \frac{1}{5}\cdot f_{21} & \ 0 & \ \ \frac{4}{5}\cdot f_{23} \\
   & & \\
   \frac{3}{4}\cdot f_{31}& \ \ \frac{1}{4}\cdot f_{32} & \ \ 0
  \end{array} } \right),
  \end{equation*}
  where $f_{ij}$ denotes the probability distribution function for the sojourn time when transitioning from state $i$ to state $j$.
The stationary distribution of $J$ is given by the stochastic vector
\begin{equation*}
    \nu=\left(\frac{24}{67},\, \frac{15}{67},\,\frac{28}{67}\right).
\end{equation*}
If we choose the following parameter values:
\begin{itemize}
    \item $\alpha=(2,3,4)$,
    \item $\beta=(1,2,3)$,
    \item $\gamma=(3,2,3)$,
\end{itemize}    then the corresponding first and second moments, along with the covariance terms, are:
\begin{itemize}
    \item For the first time dimension  $$m^{[1]}=(6,6,8),$$ $$m^{[1,1]}=(41,41,71).$$
     \item For the second time dimension $$m^{[2]}=(5,5,7),$$ $$m^{[2,2]}=(29,29,55).$$
     \item The covariance term $$c^{[1,2]}=(3,2,3).$$
\end{itemize}
These values correspond to the state-dependent moments and covariances of the sojourn time distributions.

Then, using these inputs, the moments of $S^{(j)}_{0}$, whether joint or marginal, can be calculated.
 Specifically, for the first dimension, the marginal first moments are:
\begin{itemize}
    \item $\mu^{(1)}_{jj}=(19.083, \ 30.533,\ 16.357)$
        \item $\mu^{(1,1)}_{jj}=( 450.042,\ 1361.267,\  299.291)$
\end{itemize}

while for the second dimension, the marginal first moments are:
\begin{itemize}
    \item $\mu^{(2)}_{jj}=(16.292, \ 26.067,\ 13.96429)$
        \item $\mu^{(2,2)}_{jj}=(370.500,\ 1139.133,\  238.046)$
\end{itemize}
Consequently, the variances are:
\begin{itemize}
    \item $Var_{j}\left[S^{(j)}_{0}\right]=(85.868,\ 428.982,\  31.735)$
        \item $Var_{j}\left[S^{(j)}_{0}\right]=(105.082,\ 459.662,\  43.045)$,
\end{itemize}
while the mean product of the corresponding recurrence times  in each state is:        $$\mu^{(1,2)}_{jj}=(378.188,\ 1153.600,\  249.867).$$

Therefore, the corresponding covariances are:
$$c_{jj}=( 67.288,\ 357.698,  \ 21.452),$$
and finally, the desired correlations are
$$ Cor_{j}\left[S^{(j,1)}_{0},S^{(j,2)}_{0}\right]=(0.708,\ 0.806,\ 0.580).$$

  \section{Discussion}
  In this work, we introduced a novel type of Markov renewal chains based on a  time-multidimensional extension of the classical ones. We presented a concise algebraic framework for analyzing their probabilistic structure, focusing on Markov renewal equations and convolution properties. This extension generalizes classical Markov renewal and semi-Markov theory involving multiple time dimensions and offers a clear formulation and faster computations using matrix and convolutional operations. In particular, the convolutional inverse of a matrix sequence, essential for solving Markov renewal equations, is performed using a convolution-based Gauss-Jordan algorithm.

Several directions are open for future exploration. A natural extension of the present framework would be the development of a continuous-time analogue and its approximation by the current study, using discretization methods. Moreover, studying the asymptotic properties of suitable functionals delves into the long-run behavior of a multi-time MRC. From a statistical perspective, both non-parametric and parametric methods could be constructed for estimation purposes. Potential applications include multi-time reliability models.

\begin{appendix}

\section{Fast Fourier Transform }
We begin by introducing the following notations:    \begin{itemize}
        \item $\boldsymbol{i}$: the imaginary unit
        \item $e^{\boldsymbol{i}\,\omega_{1:d}}$= $(e^{\boldsymbol{i}\,\omega_{1}},\ldots,e^{\boldsymbol{i}\,\omega_{d}}) $
        \item $k_{1:d}\cdot \omega_{1:d} $ denotes the inner product $\sum_{i} k_{i} \cdot \omega_{i}$
    \end{itemize}
    \begin{definition}
        The \textit{discrete time Fourier transform} (DFT) of a sequence  $A$ $ \in \mathcal{M}_{s}(\mathbb{N}^{d})$ is a complex matrix valued function defined by
         \begin{equation*}
        \widehat{A}(\omega_{1:d})=\sum_{k_{1:d}}A(k_{1:d})\exp\{-\boldsymbol{i} \left(k_{1:d}\cdot \omega_{1:d}\right)\},\quad\omega_{1:d} \in \mathbb{R}^{d},
    \end{equation*} 
       or, equivalently, $\widehat{A}=\left( \widehat{A}_{ij}(\omega_{1:d})\right)_{i,j \in E}$,
    %
    where $\widehat{A}_{ij}(\omega_{1:d})$ is the DFT of the real sequence $A_{ij}(\omega_{1:d})$. 
    \end{definition}
    
   
\begin{definition}  Let $\widehat{A}$ be the DFT of a sequence $A$ $ \in \mathcal{M}_{s}(\mathbb{N}^{d})$. The \textit{inverse DFT} is determined by  
     \begin{equation*}
        A_{ij}(k_{1:d})= \frac{1}{(2\pi)^{d}}\int_{[-\pi,\, \pi]^{d}} \widehat{A}_{ij}\left(e^ {\boldsymbol{i}\,\omega_{1:d}} \right) \exp\{\boldsymbol{i} \left(k_{1:d}\cdot \omega_{1:d}\right)\}d\omega_{1:d},\quad k_{1:d} \in \mathbb{N}^{d},\, i,j\in E.
    \end{equation*}
\end{definition}
   
  The convolution theorem for matrix sequences follows from the real-valued sequences one: 
  %
    \begin{equation*}
   A*B=C \ \ \Longleftrightarrow \ \ \widehat{A}\cdot \widehat{B}=\widehat{C}.  
    \end{equation*}
     
    For a finite sample sequence, truncated to lie in the finite domain dimensions, $k_{i}$ in the $i$-th dimension, $i=1,\ldots,d$,  the Fourier transform reduces to the multidimensional DFT, which involves the roots of unity in each coordinate direction:
   \begin{equation}\label{DFT}
        \widehat{A}_{ij}(l_{1:d})=\sum_{m_{1:d} \leq k_{1:d}}A_{ij}(m_{1:d})\exp\left\{-2\cdot \pi\cdot \boldsymbol{i} \sum_{r=1}^{d} \frac{l_{r}\cdot m_{r} }{k_{r}} \right\},\quad l_{1:d}\leq k_{1:d}.
    \end{equation}
   Finally, the inverse Fourier transform is applied by:    
 \begin{equation}\label{IDFT}
  A_{ij}(l_{1:d})  = \frac{1}{k_1\cdots k_d } \cdot \sum_{m_{1:d} \leq k_{1:d}}\widehat{A}_{ij}(m_{1:d})\exp\left\{2\cdot \pi\cdot \boldsymbol{i} \sum_{r=1}^{d} \frac{l_{r}\cdot m_{r} }{k_{r}} \right\},\quad l_{1:d}\leq k_{1:d}.
 \end{equation}
   A common technique to accelerate convolution is to employ the circular convolution theorem, which enables the efficient computation of linear convolutions using the FFT.
Let  $k_{1:d},\,N_{1:d} \in \mathbb{Z}^{d}$ be two integer vectors. We then define a component-wise modular shift operation as:
$$k_{1:d} \mod  N_{1:d}=\left(k_{l} \mod N_{l}\right)_{1\leq l \leq d}.$$
\begin{definition}
    The circular convolution of two sequences $A$ and $B$,  denoted by  $A*_{N} B$,   in a $d$-dimensional time domain, is given by
\begin{equation}\label{Circ}
  \left[A*_{N} B\right](k_{1:d})=\sum_{l+l'\leq k_{1:d}} A(l) B(l'\mod N_{1:d} ).
\end{equation}
\end{definition}
If two sequences $A$ and $B$  of length $N_i$ and  $M_i$ in the $i$-th dimension, respectively, are zero padded to length $L_i$ (adding zeros), such that $L_i\geq N_i+M_i-1$, then their circular convolution coincides with the standard linear convolution.  In this case, the linear convolution can be computed using the following steps:
\begin{enumerate}
    \item  Pad both $A$ and $B$ with zero matrices.
    \item Perform a circular convolution on the padded domain.
    \item Extract the final result over the valid index range.
\end{enumerate}
Moreover, the DFT of a circular convolution corresponds to the matrix product of the individual DFTs.
The DFT inherently assumes that the sequences are of finite length.
Thus, when computing convolution via the DFT, the default operation performed is circular convolution, not linear convolution.

Circular convolutions of matrix-valued sequences can be computed far more efficiently by leveraging the Fast Fourier Transform (FFT).
The FFT dramatically reduces computational complexity, especially for high-dimensional or large-scale data  (using algorithms such as \cite{d3ea2d52-5ab2-3128-8b80-efb85267295d}),  making it particularly suitable for large matrix-valued sequences.
 In order to compute the convolution of two matrix-valued sequences $A$ and $B$ using FFT with zero-padding, the following steps are performed:
 \begin{enumerate}
     \item Compute the Fourier  transforms $\widehat{A}$ and $\widehat{B}$.
     \item Compute the matrix product $\widehat{A}\cdot \widehat{B}$.
     \item Determine the inverse DFT of $\widehat{A}\cdot \widehat{B}$. 
     \item Take the convolution $A* B$ as the final output.
 \end{enumerate}

As a result, the overall computational complexity for performing the convolution is reduced to $\mathcal{O}\left(s^{3} \cdot \prod_{i=1}^{d}k_{i}\cdot \log_{2}(k_i) \right)$.
\end{appendix}

\section{Newton's inversion method proof}  \label{1-2-3}
We define the error term $$e_{N}:=1-a\cdot b_{N} \in \left<x_{1},\ldots, x_d\right>^{2^N} ,$$
since $a\cdot b_{N}=1 \mod \left<x_{1},\ldots, x_d\right>^{2^N}$.  Now, if $y:=b_{N}+b_{N}\cdot \alpha$, then we have
\begin{eqnarray*}
y \cdot a 
&=& \left(  b_{N} +  b_{N} \cdot  e_{N} \right) \cdot a = b_{N} \cdot a +  b_{N} \cdot  e_{N} \cdot a \\
&=& 1 -  e_{N} +  b_{N} \cdot  e_{N} \cdot a = 1 -  e_{N} \cdot (1 -  b_{N} \cdot a) \\
&=& 1 -  e_{N}^2.
\end{eqnarray*}

Since  $e_{N}\in \left<x_{1},\ldots, x_d\right>^{2^N} $, it follows that 
$e^{2}_{N} \in \left<x_{1},\ldots, x_d\right>^{2^{N+1}}.$
Hence, $$y \cdot a = 1 \mod \left<x_{1},\ldots, x_d\right>^{2^{N+1}},$$ as required.
From the uniqueness of the power series inverse, we conclude that $y$ contains the elements of the convolutional inverse up to $(2^{N+1},\ldots, 2^{N+1})$. 
\end{document}